\newcommand\blfootnote[1]{%
  \begingroup
  \renewcommand\thefootnote{}\footnote{#1}%
  \addtocounter{footnote}{-1}%
  \endgroup
}
\DeclareMathOperator{\im}{Im}
\newtheorem{theorem}{Theorem}[section]
\newtheorem{definition}[theorem]{Definition}
\newtheorem{example}[theorem]{Example}
\newtheorem{corollary}[theorem]{Corollary}
\newtheorem{proposition}[theorem]{Proposition}
\newtheorem{lemma}[theorem]{Lemma}
\newtheorem{problem}[theorem]{Problem}
\begin{document}

\title{\bf On Infinitely Many Siblings for Locally Finite Trees with Parabolic Self-Embeddings
\blfootnote{2020 {\em Mathematics Subject Classification:} trees (05C05). \\ {\em Key words:} siblings, locally finite trees, self-embeddings. \\ This is a project as part of author's thesis under supervision of Dr. Claude Laflamme and Dr. Robert Woodrow at the Department of Mathematics and Statistics, University of Calgary, Calgary, AB, Canada (2017-2022).}}

\author{Davoud Abdi} 

\maketitle              

\begin{abstract}
Parabolic (resp. hyperbolic) self-embeddings of trees are those which do not fix a non-empty finite subtree and preserve precisely one (resp. two) end(s). 
We prove that a locally finite tree having a parabolic self-embedding is mutually embeddable with infinitely many pairwise non-isomorphic trees, unless the tree is a one-way infinite path. As a result, we conclude that two important properties identified by  Bonato-Tardif and Tyomkyn hold for locally finite trees not having any hyperbolic self-embedding. 
\end{abstract}

\section{Introduction}

Trees in this context are in graph theoretical sense, that is, connected and acyclic simple graphs.
An {\em embedding} from a tree $T$ to another tree $S$ is an injective map from the vertex set of $T$ to the vertex set of $S$ preserving the adjacency relation. A tree $T$ {\em embeds} in another tree $S$, denoted by $T\hookrightarrow S$, if there is an embedding from $T$ to $S$. Two trees $T, S$ are  {\em equimorphic} or {\em siblings}, denoted by $T\approx S$, if there are mutual embeddings between them. An embedding from $T$ to itself is called a {\em self-embedding} of $T$. The set of all self-embeddings of a tree $T$ forms a monoid under composition of functions called the {\em monoid of self-embeddings} of $T$, denoted by $Emb(T)$.  Clearly, two finite equimorphic trees are isomorphic, however, this is no longer the case for infinite trees. For instance, a tree consisting of a vertex $r$ and countably many paths of length 2 attached to $r$ has countably many siblings, up to isomorphism. The number of isomorphism classes of siblings of a tree $T$ is called the {\em sibling number} of $T$, denoted by $Sib(T)$.

\begin{definition} [The Tree Alternative Property, TAP in short, \cite{LPS}] 
A tree T has the Tree Alternative Property if $Sib(T)=1$ or $\infty$.
\end{definition}

Bonato and Tardif \cite{BT} (2006) conjectured that TAP holds for each tree ({\em The Tree Alternative Conjecture}). As the first step, they \cite{BT} proved that rayless trees satisfy TAP. Their proof relies on a fixed point theorem of Halin \cite{HA}, which states that every rayless tree $T$ has either a vertex or an edge that is fixed by every self-embedding of $T$. TAP was  verified for rooted trees by Tyomkyn \cite{TY} (2009), who also made some progress towards TAP for locally finite trees. 
 
\begin{definition} [The Abundant Sibling Property, ASP in short]
A tree T has the Abundant Sibling Property if  $Sib(T)=\infty$.
\end{definition} 

Tyomkyn \cite{TY} conjectured that if a locally finite tree has a non-surjective self-embedding, then it has ASP, unless the tree is a one-way infinite path ({\em Tyomkyn's Conjecture}). 
Laflamme, Pouzet, Sauer \cite{LPS} (2017) used Halin's fixed point theorem to prove  TAP for scattered trees, that is trees not containing a subdivision of the complete binary tree. Indeed, they verified  TAP for a more general class of trees namely stable trees. They also showed that ASP holds for locally finite scattered trees. Hamann \cite{HAM} (2019) used a result by Laflamme, Pouzet and Sauer to prove that a tree either is non-scattered or has a vertex, an edge, an end or two ends fixed by all its self-embeddings. With the help of his analysis of the monoid of self-embeddings of trees, Hamann \cite{HAM} also gave a precise description of the open cases for  TAP and proved that  TAP holds for a tree whose monoid of self-embeddings does not satisfy two structural properties. 

While working to extend Hamann’s results, the author was informed by Tyomkyn (personal communication, \cite{TYP}) of an unpublished manuscript by Tateno \cite{TAT} (2008) claiming the construction of a locally finite tree with an arbitrary finite number of siblings. Together with Laflamme, Tateno and Woodrow, we undertook to revisit the ideas carefully and provide the details, (see \cite{ALTW}). Thus, there are locally finite trees having an arbitrary finite number of siblings. Moreover, these trees have non-surjective self-embeddings and hence have neither TAP nor ASP. 
This is a major development in the programme of understanding siblings of a given tree. While the first approach was toward proving all trees have TAP, the equimorphy programme has moved on and focused on the actual structure of those siblings. In other words, it is interesting to establish the boundaries and determine the structure of those trees satisfying TAP or ASP.

In this article we report on work from before learning of Tateno’s claim investigating TAP using the monoid of self-embedding. 
Our method is based on the notion of {\em end} which was first defined by Halin \cite{HAL} in terms of equivalence classes of infinite paths: two infinite paths in a tree are {\em equivalent} if their intersection is also an infinite path. Halin \cite{HAL} also classified the self-embeddings of trees according to type 1 and type 2 which are respectively called elliptic and non-elliptic self-embeddings by Hamann \cite{HAM}.  We first show in Section \ref{Stabletheorem} that TAP holds in one of the two open cases of Hamann mentioned earlier. Then, in Section \ref{LFTrees}, we will study locally finite trees by analysing their self-embeddings. 
Elliptic self-embeddings of trees are those which fix a finite subtree. Non-elliptic self-embeddings of trees are of two types: parabolic and hyperbolic. Parabolic (resp. hyperbolic) self-embeddings are those which do not fix a non-empty finite subtree and fix precisely one (resp. two) end(s). Halin \cite{HAL} showed that elliptic self-embeddings of locally finite connected graphs are automorphism. We will show that if a locally finite tree $T$ has a parabolic self-embedding, then $T$ has infinitely many siblings, unless $T$ is a one-way infinite path. These two results together imply that both TAP and ASP hold for a locally finite tree not having any hyperbolic self-embedding. This poses a restriction on locally finite trees making them have TAP. Given the locally finite tree examples constructed in \cite{ALTW} which satisfy neither TAP nor ASP, this article ends with asking what further structural conditions may be added to ensure a locally finite tree satisfies TAP or ASP.

\section{A Fixed Point Theorem} \label{Stabletheorem}

A {\em ray} (resp. {\em double ray}) is a one-way (resp. two-way) infinite path. 
Let $T$ be a tree. Two rays $R_1$ and $R_2$ in $T$ are  {\em equivalent}, denoted by $R_1 \sim R_2$, if their intersection is also a ray. This is an equivalence relation whose classes are the {\it ends} of $T$. 
Therefore, an end 
$\eta$ of $T$ consists of equivalent rays i.e. $\eta=[R]_\sim$ for some ray $R$ in $T$. Let $\eta$ be an end of $T$. By an $\eta$-ray $R$, we mean $R\in \eta$. The set of ends of $T$ is denoted by $\Omega(T)$.  
We say that a vertex $r\in T$ \textit{separates} a vertex $v\in T$ from an end $\eta\in \Omega(T)$, when the unique ray in $\eta$ starting at $v$ passes through $r$.  A vertex $r\in T$ separates two ends $\eta, \mu\in \Omega(T)$ when the unique double ray between $\eta$ and $\mu$ passes through $r$. A sequence of vertices $(x_n)_{n<\omega}$ of $T$ \textit{converges} to an end $\eta\in \Omega(T)$, denoted by $x_n\to\eta$, if there exists an $\eta$-ray $R=r_0r_1\cdots$ such that for every $n<\omega$, $r_n$ separates only finitely many vertices of the sequence from $\eta$. We also say that a sequence of ends $(\mu_n)_{n<\omega}$ converges to an end $\eta$, denoted by $\mu_n\to\eta$, if there exists an $\eta$-ray $R=r_0r_1\cdots$ such that for every $n$, $r_n$ separates only finitely many elements in the sequence $(\mu_n)_{n<\omega}$ from the end $\eta$. 
For instance, 
let $T$ consist of a ray $R=r_0r_1\cdots$ and new vertices $x_n$, $n < \omega$, such that each $x_n$ is attached to $r_n$ by an edge. The sequence $(x_n)_{n < \omega}$ converges to the end containing the ray $R$ because for every $n<\omega$, $r_n$ separates only the vertices $x_m$ with $m\leq n$. 
Let $(x_n)_{n<\omega}$ be a sequence of vertices of $T$ such that $x_n\to\eta$ for some $\eta\in \Omega(T)$. Note that whenever $(y_n)_{n<\omega}$ is another sequence of vertices so that for some positive integer $N$, $d(x_n,y_n)\leq N$ for every $n<\omega$, then $y_n\to\eta$. This property is called \textit{projectivity} and it is true in every tree (\cite{HAM}). Let $M$ be a submonoid of $Emb(T)$. The \textit{limit set} $\mathcal{L}(M)$ is the set of accumulation points of $\{g(x): g\in M\}$ in $\Omega(T)$ for any $x\in T$. By projectivity, the limit set is independent from the choice of $x$. 
A self-embedding $f$ of $T$ preserves a ray $R$ in $T$   \textit{forward}, (resp. \textit{backward}), if $f(R)\subseteq R$, (resp. $R\subseteq f(R)$), and $f$ preserves an end $\eta$ of $T$  forward, (resp. backward), if $f$ preserves some $\eta$-ray forward, (resp. backward). We also say that a submonoid $M$ of $Emb(T)$ preserves an end $\eta$ forward, (resp. backward), if every $f\in M$ preserves $\eta$ forward, (resp. backward) \cite{HAM}.  We follow Hamann \cite{HAM} to name the elements of $Emb(T)$. 
A self-embedding $g$ of $T$ is  
\begin{itemize}
    \item {\it elliptic}, if $g$ fixes a non-empty finite subtree of $T$,
    \item {\it parabolic}, if $g$ is not elliptic and it fixes precisely one end of $T$, and
    \item {\it hyperbolic}, if $g$ is not elliptic and it fixes precisely two ends of $T$. 
\end{itemize}
Halin \cite{HAL} proved that when $g\in Emb(T)$ is elliptic, then $g$ fixes a vertex or an edge, and if it is non-elliptic, then there exists a ray $R$ in $T$ such that $g(R)\subset R$. This ray can be extended to either a maximal ray $R^\prime$ with $g(R^\prime)\subset R^\prime$ or a $g$-invariant double ray. In the first, (resp. second), case $g$ is parabolic (resp. hyperbolic). Indeed, the set $\{x:d(x,g(x))\ \text{is minimal}\}$ is the fixed ray, double ray, edge or the set of fixed vertices. Thus, every self-embedding of $T$ is either elliptic, parabolic or hyperbolic. 

Let $g\in Emb(T)$ be non-elliptic, where $T$ is a tree. Let $\eta_1$ and $\eta_2$ be two ends of $T$ preserved forward by $g$. If $\eta_1\neq \eta_2$, then consider the unique double ray $Z$ between $\eta_1$ and $\eta_2$. It follows that $Z$ has two tails $R_1\in\eta_1$ and $R_2\in\eta_2$ with $g(R_1)\subset R_1$ and 
$g(R_2)\subset R_2$, which is impossible. The unique end fixed by $g$ and determined by the sequences $x, g(x), g^2(x), \ldots$, $x\in T$, will be called the \textit{direction} of $g$ and denoted by $g^+$. When $g$ is hyperbolic, we denote by $g^-$ the unique $g$-invariant end other than $g^+$. When $\eta\in \Omega(T)$ is a direction, it means that there exists a non-elliptic self-embedding $g$ of $T$ such that $g^+=\eta$. Let $M$ be a submonoid of $Emb(T)$. The set of all directions of non-elliptic elements in $M$ is denoted by $\mathcal{D}(M)$  (\cite{HAM}). 

It is clear that in finite trees both the limit set and the set of directions are empty because there is no end. Let $\mathcal{R}$, (resp. $\mathcal{Z}$), be the tree consisting of a ray, double ray. Then, by setting $M:=Emb(\mathcal{R})$ and $N:=Emb(\mathcal{Z})$, we have $|\mathcal{L}(M)|=|\mathcal{D}(M)|=1$ and $|\mathcal{L}(N)|=|\mathcal{D}(N)|=2$.
The following example shows that in a tree, there might be only one limit point but no direction. This example is from a personal communication with Hamann. 

\begin{example}[\cite{HAMP}, $\mathcal{D}(M)=\emptyset$ but $|\mathcal{L}(M)|=1$] 
Let $\mathcal{T}^3$ be the 3-regular tree, that is the countable tree for which each vertex has degree precisely 3. 
Let $\eta$ be an end of $\mathcal{T}^3$. Let $M$ be the submonoid of  $Emb(\mathcal{T}^3)$ such that each element of $M$ fixes all vertices of some ray in $\eta$. To see that this is a monoid, take the composition of two elements $f, g$ of $M$ and let $R_f, R_g$ be the rays in $\eta$ that are fixed by $f, g$ pointwise, respectively. Then $R_f\cap R_g$ is an $\eta$-ray which is fixed by $f\circ g$ pointwise. Therefore, $f\circ g$ lies in $M$. In particular, all elements of $M$ are elliptic and hence $\mathcal{D}(M)$ is empty. 

Now we show that $\eta$ is the unique limit point of M. 
Let $R=x_0x_1, \ldots$ be an $\eta$-ray and for each $n$ let $g_n$ be an self-embedding in M that fixes precisely the vertices $x_n, x_{n+1}, \ldots$. Then, the sequence $x_0, g_0(x_0), g_1(x_0), \ldots$ converges to $\eta$. Therefore, $\eta$ is a limit point of M.
Suppose that there is a second limit point $\mu\neq\eta$ of M. Since $\mu$ is a limit point, there is a sequence $f_0, f_1, f_2, \ldots$ of self-embeddings in M such that $x_0, f_0(x_0), f_1(x_0), \ldots$ converges to $\mu$. Let $S=y_0, y_1, \ldots$ be a ray in $\mu$ such that $Z=\ldots, y_2, y_1, y_0, x_n, x_{n+1}, ...$ is the double ray between $\mu$ and $\eta$. Since $f_n\in M$ and all $\eta$-rays are equivalent, each $f_n$ fixes a tail of $R$ pointwise. Therefore, we conclude that there is some $k\geq n$ such that $d(x_0,x_k)=d(f_n(x_0),x_k)$. As a consequence, $y_k$ separates $\mu$ from all $f_n(x_0)$, where $n$ is such that $d(x_0,x_n)+1=k$. Thus, the sequence $x_0, f_0(x_0), f_1(x_0), \ldots$ cannot converge to $\mu$, a contradiction. Thus, $\eta$ is the only limit point of M. 
\end{example}

Let $\Gamma, \Delta\subseteq \Omega(T)$. We say that $\Gamma$ is {\em dense} in $\Delta$ if for $\eta\in \Delta$ there is a sequence $(\mu_n)_{n<\omega}$ of elements of $\Gamma$ converging to $\eta$.  

\begin{theorem} [\cite{HAM}] \label{SizeofDm}
Let $M$ be a submonoid of $Emb(T)$.
\begin{enumerate}
\item If $|\mathcal{L}(M)|\geq 2$, then $\mathcal{D}(M)$ is dense in $\mathcal{L}(M)$.
\item The set $\mathcal{L}(M)$ has either zero, one, two or infinitely many elements.
\item The set $\mathcal{D}(M)$ has either zero, one, two or infinitely many elements.
\end{enumerate} 
\end{theorem}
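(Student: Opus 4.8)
The three items are tightly linked, and I would prove item (1) first by an explicit construction and then deduce items (2) and (3) from it. Two facts I would use throughout: an embedding of a tree is an isometric embedding (an adjacency-preserving injection cannot introduce backtracking, so it preserves graph distance), hence extends to a continuous injection $\bar g$ of the end-compactification $T\cup\Omega(T)$; and a nonzero power of a non-elliptic embedding is again non-elliptic with the same direction (it still has positive minimal displacement and no periodic vertex orbit, so fixes no finite subtree). Note also at the outset that $\mathcal D(M)\subseteq\mathcal L(M)$, since $g^+=\lim_m g^m(x)$.

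For item (1), fix distinct $\eta,\mu\in\mathcal L(M)$ and let $Z=(z_k)_{k\in\mathbb Z}$ be the double ray between them, oriented so that $z_k\to\eta$; by projectivity take the basepoint to be $x=z_0$. It is enough to produce, for every $n\ge 1$, a non-elliptic $f\in M$ whose direction $f^+$ is separated from $\mu$ by $z_n$: these $f^+$ then converge to $\eta$, and since every end of $\mathcal L(M)$ has a partner in $\mathcal L(M)$ (as $|\mathcal L(M)|\ge 2$) this establishes density at each end of $\mathcal L(M)$. To build $f$ I use that $\eta$ and $\mu$ are accumulation points of the orbit of $x$: for $N$ as large as wanted there are $g,h\in M$ with $g(x)$ separated from $\mu$ by $z_N$ and $h(x)$ separated from $\eta$ by $z_{-N}$, equivalently $[x,g(x)]$ begins $z_0z_1\cdots z_N$ and $[x,h(x)]$ begins $z_0z_{-1}\cdots z_{-N}$. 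A vertex fixed by $g$ is equidistant from $x$ and $g(x)$, so its median with $\{x,g(x)\}$ is the midpoint of $[x,g(x)]$, which (taking $N\ge 2n$) lies beyond $z_n$ on the $\eta$-side; likewise every vertex fixed by $h$ lies beyond $z_{-n}$ on the $\mu$-side. If $g$ and $h$ are both elliptic, their fixed subtrees are therefore disjoint with the bridge between them running through the segment $z_{-n}\cdots z_n$ of $Z$, and I would invoke the classical fact that the composition of two elliptic embeddings with disjoint fixed subtrees is non-elliptic, translating along an extension of that bridge in the direction of the first factor's fixed subtree; thus $f:=g\circ h\in M$ is non-elliptic with $f^+$ separated from $\mu$ by $z_n$. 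The cases where $g$ or $h$ is itself non-elliptic are handled by a separate, more elementary argument: such an element moving $x$ far along $Z$ must have its displacement-minimising ray (or double ray) agree with $Z$ towards $\eta$ past $z_n$, so its direction already lies in the right cone, or else it can be combined with an elliptic element of $M$ as above. Verifying the orientation in the "product of two elliptics" fact for non-surjective embeddings, via Halin's description of $\{y:d(y,g(y))\text{ minimal}\}$, together with this case analysis, is the step I expect to be the main obstacle.

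For items (2) and (3) the key is: \emph{if $\mathcal L(M)$ is finite then $|\mathcal L(M)|\le 2$.} First, $\mathcal L(M)$ is invariant under $\bar g$ for each $g\in M$: if $f_j(x)\to\zeta$ then $gf_j\in M$ and $gf_j(x)=g(f_j(x))\to\bar g(\zeta)$. Hence, when $\mathcal L(M)$ is finite, each $\bar g$ (being injective on $\Omega(T)$) restricts to a permutation of $\mathcal L(M)$. Suppose $|\mathcal L(M)|=k\ge 3$. By item (1), $\mathcal D(M)\ne\varnothing$; fix a non-elliptic $g\in M$ and set $\nu:=g^+\in\mathcal L(M)$, so $\bar g$ fixes $\nu$. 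For any $\mu\in\mathcal L(M)\setminus\{\nu\}$ the $\bar g$-orbit of $\mu$ is finite, so $\bar g^p(\mu)=\mu$ for some $p\ge 1$; then $g^p$ fixes the end $\mu$. But $g^p$ is non-elliptic with direction $\nu$, so by Halin's classification it fixes at most two ends — namely $\nu$, and (only in the hyperbolic case) $g^-$. Hence $\mu=g^-$. Since $k\ge 3$ there are two distinct such $\mu$, both equal to $g^-$, a contradiction.

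Finally, this yields (2) and (3). If $\mathcal L(M)$ were finite with at least three elements, that contradicts the displayed claim, so $|\mathcal L(M)|\in\{0,1,2,\infty\}$. For (3): if $\mathcal D(M)$ were finite with at least three elements, then $|\mathcal L(M)|\ge|\mathcal D(M)|\ge 3$, so by (1) $\mathcal D(M)$ is dense in $\mathcal L(M)$; but a sequence drawn from the finite set $\mathcal D(M)$ can converge to an end $\zeta$ only when $\zeta\in\mathcal D(M)$ (a constant sequence at $\nu\ne\zeta$ does not converge to $\zeta$, since a vertex past the branch point of $\nu$ and $\zeta$ separates $\nu$ from $\zeta$), so $\mathcal L(M)=\mathcal D(M)$ is finite with at least three elements — again contradicting the displayed claim. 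Thus $|\mathcal D(M)|\in\{0,1,2,\infty\}$ as well.
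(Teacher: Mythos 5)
First, a point of comparison: the paper does not prove this theorem at all --- it is imported verbatim from Hamann \cite{HAM} --- so there is no in-paper argument to measure yours against. Judged on its own terms, your reduction of (2) and (3) to (1) is correct and clean: the $M$-invariance of $\mathcal{L}(M)$, the fact that a positive power of a non-elliptic embedding is non-elliptic with the same direction, and Halin's bound of at most two fixed ends for a non-elliptic embedding together rule out $3\le|\mathcal{L}(M)|<\infty$; and your observation that a sequence drawn from a finite set of ends can only converge to a member of that set correctly transfers the bound to $\mathcal{D}(M)$. The architecture of (1) (midpoint/projection control of fixed subtrees, products of elliptic elements with disjoint fixed subtrees, and the direct argument for a non-elliptic element that moves $x$ deep into the target cone) is also the right one.

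The genuine gap is in the mixed case of (1), exactly where you flagged trouble, and your proposed fix does not work as stated. Suppose $g$ is elliptic with fixed subtree deep in the $\eta$-cone while the element $h$ moving $x$ deep toward $\mu$ is non-elliptic. Then ``combining $h$ with an elliptic element as above'' is not licensed: the disjoint-fixed-subtree lemma applies to two \emph{elliptic} factors, and the composition of an elliptic with a non-elliptic embedding can itself be elliptic. On the double ray $\mathbb{Z}$ take $x=0$, $\eta=+\infty$, $\mu=-\infty$, $g(v)=40-v$ (elliptic, fixed vertex $20$, with $g(0)=40$ deep toward $\eta$) and $h(v)=v-10$ (non-elliptic, $h(0)=-10$ toward $\mu$): both $g\circ h$ and $h\circ g$ are reflections, and inside the monoid generated by $g,h$ one only reaches a non-elliptic element with direction $+\infty$ via a longer word such as $g\circ h\circ g$. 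The repair is available from the tools you already set up, but it is a real extra step: $w:=g\circ h$ sends $x$ deep into the $\eta$-cone; if $w$ is non-elliptic you are done by your ``deep displacement forces the direction into the cone'' argument, while if $w$ is elliptic then its fixed subtree is disjoint from that of $g$ (a common fixed vertex $q$ would give $g(h(q))=q=g(q)$, hence $h(q)=q$ by injectivity of $g$, impossible for non-elliptic $h$), both fixed subtrees lie deep in the $\eta$-cone with $\mathrm{Fix}(g)$ between $z_n$ and $\mathrm{Fix}(w)$, and the elliptic--elliptic lemma applied to $w\circ g$ yields a direction in the required cone. Separately, that elliptic--elliptic lemma is classical only for automorphisms; for non-surjective embeddings you should prove it rather than cite it (your midpoint identity $d(p,g(p))=2d(p,\mathrm{Fix}(g))$ is the needed ingredient and the standard proof does go through).
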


The canonical theorem of Hamann \cite{HAM} given below is called \say{fixed point theorem for self-embeddings} and it determines all possible cases for the monoid of self-embeddings of a tree. 

\begin{theorem}[\cite{HAM}] \label{Classification}
Let T be a tree and $M$ be a submonoid of $Emb(T)$. Then one and only one of the following holds.
\begin{enumerate}[label=\roman*)]
\item $M$ fixes either a vertex or an edge of $T$;
\item $M$ fixes a unique element of $\mathcal{L}(M)$;
    \item $\mathcal{L}(M)$ consists of precisely two elements;
    \item $M$ contains two non-elliptic elements that do not fix the direction of the other.
\end{enumerate}
\end{theorem}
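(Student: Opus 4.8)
The plan is to prove the dichotomy by splitting on the cardinality of $\mathcal{L}(M)$, which by Theorem~\ref{SizeofDm}(2) is $0$, $1$, $2$ or $\infty$, and to treat mutual exclusivity separately. Two elementary observations are used throughout. First, every embedding of $T$ is an isometry (the image of a geodesic is a simple path of the same length), so an embedding $g$ induces a map $g_*$ on $\Omega(T)$, preserves the separation relations among vertices and ends, and by projectivity is continuous in the sense that $x_n\to\xi$ implies $g(x_n)\to g_*(\xi)$. Second, the orbit $O=\{g(x):g\in M\}$ satisfies $h(O)\subseteq O$ for every $h\in M$, since $h\circ g\in M$. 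Together these give that if $g\in M$ is non-elliptic then $x,g(x),g^2(x),\dots\to g^+$, so $g^+\in\mathcal{L}(M)$; hence $\mathcal{D}(M)\subseteq\mathcal{L}(M)$ in all cases.

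If $\mathcal{L}(M)=\emptyset$, then no element of $M$ is non-elliptic (its direction would lie in $\mathcal{L}(M)$), so by Halin's description each $g\in M$ fixes a vertex or an edge; let $F_g$ be its finite set of minimally displaced points. If $F_g\cap F_h=\emptyset$ for some $g,h\in M$, then $g\circ h$ translates along the bridge between $F_g$ and $F_h$ and is non-elliptic, a contradiction; so the finite subtrees $F_g$ pairwise intersect, and a short pigeonhole on the at most two vertices of a single $F_{g_0}$, together with the Helly property of subtrees, yields one vertex or edge fixed by all of $M$ — case~(i). If $\mathcal{L}(M)=\{\eta\}$, take $g_n\in M$ with $g_n(x)\to\eta$; for any $h\in M$ the points $h(g_n(x))=(h\circ g_n)(x)$ lie in $O$ and converge to $h_*(\eta)$, so $h_*(\eta)\in\mathcal{L}(M)=\{\eta\}$ and $h$ fixes $\eta$; thus $M$ fixes $\eta$ — case~(ii). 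If $\mathcal{L}(M)$ has exactly two elements we are in case~(iii) by definition. If $\mathcal{L}(M)$ is infinite, Theorem~\ref{SizeofDm}(1) makes $\mathcal{D}(M)$ dense in it, and since a constant sequence of ends converges only to itself a finite set of ends cannot be dense in an infinite one, so $\mathcal{D}(M)$ is infinite. Pick non-elliptic $g_1,\dots,g_4\in M$ with pairwise distinct directions $\nu_1,\dots,\nu_4$; each $g_i$ fixes at most two ends, hence at most one $\nu_j$ with $j\neq i$, so the digraph on $\{1,2,3,4\}$ with an arc $i\to j$ when $g_i$ fixes $\nu_j$ has at most four arcs and misses at least one of the six pairs; that pair gives two non-elliptic elements, neither fixing the other's direction — case~(iv).

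For mutual exclusivity, case~(i) forces every element elliptic and $\mathcal{L}(M)=\mathcal{D}(M)=\emptyset$, excluding (ii), (iii), (iv); and (ii) and (iii) are separated by $|\mathcal{L}(M)|$ once one records that in case~(ii) one in fact has $\mathcal{L}(M)=\{\eta\}$ (if $M$ fixed $\eta$ while $\mathcal{L}(M)$ had a second element $\mu$, density of $\mathcal{D}(M)$ near $\mu$ together with the fact that a non-elliptic element fixing two distinct ends is hyperbolic with those as its two ends leads back to case~(iv)). The substantive point is that (iv) excludes (ii) and (iii): given non-elliptic $g,h\in M$ with $g^+=\alpha\neq\beta=h^+$, with $g$ not fixing $\beta$ and $h$ not fixing $\alpha$, one runs a ping-pong argument. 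This is the main obstacle, and it is exactly where the monoid setting bites: one cannot invoke $g^{-1},h^{-1}$, and must instead use forward preservation of rays — for large $n$, $g^n$ carries the complement of a fixed neighbourhood of $\beta$ deep into the $\alpha$-direction, and symmetrically for $h^n$ and $\alpha$ — to show that words in $g,h$ of varying shape are non-elliptic with pairwise distinct directions, so $\mathcal{L}(M)$ is infinite. Combined with the passage from Halin's single-embedding fixed point theorem to a common fixed vertex/edge for $M$ in the empty case, this is where essentially all the work lies; the remainder is bookkeeping with isometries, projectivity and the orbit, together with Theorem~\ref{SizeofDm}.
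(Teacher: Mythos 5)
This theorem is quoted from Hamann \cite{HAM} and the paper contains no proof of it, so there is nothing internal to compare against; in Hamann's article the proof occupies most of the paper and runs through a fine analysis of $\mathcal{L}(M)$ and $\mathcal{D}(M)$. Your overall plan (split on $|\mathcal{L}(M)|\in\{0,1,2,\infty\}$ via Theorem~\ref{SizeofDm}, then prove exclusivity) is the natural one, and several pieces are fine (the pigeonhole producing two independent non-elliptic elements when $\mathcal{D}(M)$ is infinite, the argument that $M$ fixes $\eta$ when $\mathcal{L}(M)=\{\eta\}$). But two steps have genuine gaps.

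First, case (i). From $\mathcal{L}(M)=\emptyset$ you correctly get that every element of $M$ is elliptic, but the step from ``all elements elliptic'' to a common fixed vertex or edge is not achieved by your Helly/pigeonhole sketch, and indeed cannot be: the paper's own Example~2.1 (the monoid of embeddings of $\mathcal{T}^3$ each fixing pointwise some ray in $\eta$) consists entirely of elliptic elements with pairwise intersecting fixed subtrees and has no common fixed vertex or edge. There the fixed sets are infinite (each contains a ray), so your ``at most two vertices of a single $F_{g_0}$'' pigeonhole never starts --- the minimal displacement set of an elliptic embedding is the whole fixed subtree and need not be finite --- and an infinite pairwise-intersecting family of subtrees of a tree need not have a common vertex. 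That example lies in case (ii), not (i), so the hypothesis $\mathcal{L}(M)=\emptyset$ must be used a second time, to show that fixed subtrees with empty total intersection force the orbit to accumulate at an end; this is a substantive missing argument. Second, mutual exclusivity of (ii) and (iv). Your ping-pong only shows $\mathcal{L}(M)$ is infinite under (iv), which rules out (iii) but not (ii), since (ii) reads ``$M$ fixes a unique element of $\mathcal{L}(M)$,'' not ``$|\mathcal{L}(M)|=1$.'' Your parenthetical that (ii) forces $|\mathcal{L}(M)|=1$ is argued by ``otherwise (iv) holds,'' which presupposes the very exclusivity you are trying to prove. The danger is the focal-type configuration: two hyperbolic $g,h$ with $g^+\neq h^+$ but $g^-=h^-=\mu$, together with an element of direction $\mu$, so that $\mu\in\mathcal{L}(M)$ is fixed by all of $M$ while $g,h$ witness the literal wording of (iv). Ruling this out (or pinning down the precise meaning of ``fixes the direction of the other'') is where the real content of the exclusivity lies, and it is absent from your argument.
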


When case (iv) in  Theorem \ref{Classification} occurs, it can be shown that there is a submonoid of $M$ generated freely by two non-elliptic self-embeddings and furthermore, $T$ contains a subdivision of the complete binary tree as a subtree (see \cite{HAM}, Theorem 3.2). This fact amounts to a theorem due to Laflamme, Pouzet and Sauer asserting that if a tree $T$ is a scattered tree, then $Emb(T)$ fixes either a vertex, an edge, or a set of at most two ends of $T$ (see \cite{LPS}, Theorem 1.1). For instance, the 3-regular tree $\mathcal{T}^3$ (the tree where each vertex has degree 3) is non-scattered and there are two non-elliptic self-embeddings $f, g$ of $\mathcal{T}^3$ so that neither $f$ fixes $g^+$ nor $g$ fixes $f^+$.

Let $T$ be a tree and $R$ a ray in $T$. For every $r \in R$ we denote by $T^R_r$  the maximal subtree of $T$ containing $r$ and edge-disjoint from $R$. A ray $R$ is {\it regular} (resp. {\it non-regular}), if the number of pairwise non-equimorphic subtrees $T^R_r$ is finite (resp. infinite). An end $\eta \in \Omega(T)$ is regular (resp. non-regular) if it contains a regular (resp. non-regular) ray. Hamann \cite{HAM} provided the following criterion towards TAP. 

\begin{theorem} [\cite{HAM}]\label{Opencases}
TAP holds for all trees $T$ whose monoid $Emb(T)$ of self-embeddings \textbf{does not} satisfy the following properties.
\begin{enumerate}
    \item There is a regular end $\eta$ of $T$ such that $Emb(T)$ preserves $\eta$ forward.
    \item There is a submonoid of $Emb(T)$ generated freely by two non-elliptic elements. 
\end{enumerate}
\end{theorem}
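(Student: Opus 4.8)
\medskip
\noindent\emph{Plan of proof.}
The strategy is to run Hamann's fixed point theorem (Theorem~\ref{Classification}) on the full monoid $M = Emb(T)$ and to dispatch its four alternatives, using the two granted hypotheses exactly once each. If alternative~(iv) holds then, as recalled in the paragraph following Theorem~\ref{Classification}, $Emb(T)$ contains a submonoid freely generated by two non-elliptic embeddings, which is the second excluded property; hence (iv) is impossible and $M$ satisfies one of (i), (ii), (iii). It is also convenient to record the elementary fact that if every embedding of $T$ is surjective then $Sib(T) = 1$: given a sibling $S$ with embeddings $f\colon T\hookrightarrow S$ and $g\colon S\hookrightarrow T$, the composite $g\circ f$ is a surjective self-embedding, hence a bijection, so $g$ is a bijection and $f = g^{-1}\circ (g\circ f)$ is an isomorphism.

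Alternatives (i) and (iii) reduce to the \emph{vertex}-rooted case, where Tyomkyn's theorem \cite{TY} applies. If $M$ fixes a vertex $v$, then for every sibling $S$ the composite of a pair of mutual embeddings with $T$ fixes $v$; rooting $S$ at the image of $v$ one obtains a rooted sibling of $(T,v)$, and a routine argument transfers Tyomkyn's dichotomy for rooted trees to $Sib(T)$. If $M$ fixes an edge $\{u,w\}$, then either every embedding fixes $u$ (reducing to the previous case) or some embedding interchanges $u$ and $w$, whence the two components of $T - \{uw\}$ are equimorphic rooted trees and a standard argument --- as in the fixed-edge case of \cite{BT} --- again reduces $Sib(T)$ to the rooted dichotomy. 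In alternative (iii) one has $\mathcal{L}(M) = \{\eta_1,\eta_2\}$; by Theorem~\ref{SizeofDm} and the exclusivity of the four cases the non-elliptic elements of $M$ are hyperbolic with axis the double ray $Z$ between $\eta_1$ and $\eta_2$, so $M$ maps $Z$ into itself, and cutting $Z$ at an edge displays $T$ as two vertex-rooted trees glued along that edge; Tyomkyn's dichotomy for each half then propagates to $Sib(T)$.

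The heart of the matter is alternative (ii), where $M$ preserves a single end $\eta$. One first checks that $M$ preserves $\eta$ \emph{forward}: every non-elliptic element has its direction in $\mathcal{D}(M)\subseteq\mathcal{L}(M)$ (the inclusion holding because the direction of a non-elliptic embedding is the limit of its iterates), and by Theorem~\ref{SizeofDm} together with the exclusivity of the cases $\mathcal{L}(M) = \{\eta\}$, so that direction is $\eta$; while an elliptic element, fixing a vertex --- the fixed-edge possibility being excluded by exclusivity with (i) --- carries the unique $\eta$-ray from that vertex onto itself. Now I invoke the hypothesis that $Emb(T)$ preserves no regular end forward: since it preserves $\eta$ forward, $\eta$ is non-regular, so I may fix a ray $R = r_0 r_1 \cdots \in \eta$ with infinitely many pairwise non-equimorphic side trees $T^R_{r_i}$. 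If $Emb(T) = Aut(T)$ we are done by the remark above; otherwise there is a non-surjective embedding which, being forced to preserve $\eta$ forward, may be taken to shift a tail of $R$. The plan is then to use this shift to perform surgery along $R$ --- deleting or re-indexing initial blocks of side trees and transferring the excised part by the shift --- so as to manufacture infinitely many trees $S_0, S_1, \dots$ each mutually embeddable with $T$ (the forward shift supplying the embeddings into $T$ and back), with the non-regularity of $\eta$ forcing infinitely many of the $S_n$ to be pairwise non-isomorphic.

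The principal obstacle is exactly this last construction: one must simultaneously ensure that the modified trees still admit embeddings both to and from $T$ --- which is where one exploits that \emph{every} embedding of $T$ preserves $\eta$ forward, so the combinatorics of which side tree embeds into which along $R$ is tightly constrained --- and that the surgery cannot be reversed up to isomorphism for all but finitely many indices, i.e.\ that genuinely infinitely many isomorphism types appear; isolating an isomorphism invariant of the $S_n$ that is robust under the many possible re-choices of spine is the delicate point. Minor additional care is needed for the rooted reductions in (i) and (iii) --- notably the swapped-edge subcase and the passage between rooted and unrooted sibling numbers --- and for confirming that the four cases of the fixed point theorem are exclusive in the way the argument relies on.
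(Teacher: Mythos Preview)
The paper does not prove this theorem: it is quoted from Hamann \cite{HAM} as background, so there is no in-paper argument to compare your proposal against. That said, your route diverges from the one implicit in the surrounding material, and it carries genuine gaps.

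The argument one reconstructs from the paper's setup goes through the notion of \emph{stable tree} and the Laflamme--Pouzet--Sauer result (\cite{LPS}, Theorem~1.9, recalled just before Theorem~\ref{RegularStable}) that $Sib(T)\in\{1,\infty\}$ for every stable $T$. With $M=Emb(T)$ one applies Theorem~\ref{Classification}: case~(iv) yields a free submonoid on two non-elliptic elements, the excluded property~(2); case~(i) makes $T$ stable by definition (fixed vertex or edge); case~(iii) gives $\mathcal{L}(M)=\{\eta_1,\eta_2\}$, and since every embedding permutes this pair it maps the unique double ray between them to itself, so $T$ is again stable; in case~(ii) one shows $M$ preserves the distinguished end $\eta$ forward, whence either $\eta$ is regular --- the excluded property~(1) --- or $\eta$ is non-regular and $T$ is stable by definition. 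No sibling is ever constructed by hand.

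Your proposal instead attempts (i) and (iii) via ad hoc rooted reductions to Tyomkyn, and (ii) via explicit surgery along a non-regular ray. The reduction in~(iii) is not sound as written: cutting the invariant double ray $Z$ at an edge does \emph{not} decompose $T$ into two $M$-invariant rooted pieces, since the non-elliptic elements translate along $Z$ and carry one ``half'' into the other; there is no way to simply propagate the rooted dichotomy across that cut. In~(ii) you correctly locate the crux --- producing infinitely many non-isomorphic siblings from a non-regular end preserved forward --- but you explicitly leave it open, and this is not a detail: it is exactly the work that the stable-tree theorem of \cite{LPS} packages. There is also a small slip: you claim that in case~(ii) an elliptic element must fix a vertex because ``the fixed-edge possibility [is] excluded by exclusivity with~(i)''; but exclusivity with~(i) is a statement about $M$, not about individual embeddings, and an elliptic $g$ may well fix only an edge even when $M$ fixes neither. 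Likewise, your assertion that $\mathcal{L}(M)=\{\eta\}$ in case~(ii) overreads the theorem, which only says $\eta$ is the unique element of $\mathcal{L}(M)$ \emph{fixed by $M$}. Both points are repairable, but they indicate that the case~(ii) analysis needs more care than the sketch provides.
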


An end $\eta$ of $T$ is {\it almost rigid} if every self-embedding of $T$ preserves $\eta$ forward and backward, that is every self-embedding of $T$ fixes pointwise a cofinite subset of every ray belonging to $\eta$.

\begin{theorem}[\cite{LPS}, Theorem 1.3] \label{Regularnonalmostrigid}
Let $T$ be a tree containing a regular end $\eta$ which is not almost rigid. If $Emb(T)$ preserves $\eta$ forward, then $Emb(T)$ preserves some $\eta$-ray. 
\end{theorem}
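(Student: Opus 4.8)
The plan is to suppose $T$ contains a regular end $\eta$ that is not almost rigid and that $Emb(T)$ preserves $\eta$ forward, and then show that $Emb(T)$ preserves some $\eta$-ray. First I would fix a ray $R = r_0 r_1 r_2 \cdots$ in $\eta$; since $\eta$ is regular, the subtrees $T^R_{r_n}$ fall into only finitely many equimorphy classes. Because $\eta$ is not almost rigid there is some $g \in Emb(T)$ and some $\eta$-ray along which $g$ does \emph{not} fix a cofinite set of vertices; combined with forward preservation this means $g$ preserves $\eta$ forward but \emph{properly}, so (after replacing $R$ by a tail and using Halin's analysis as recalled after Theorem~\ref{Classification}) we may arrange $g(R) \subsetneq R$, i.e. $g$ shifts $R$ strictly toward $\eta$. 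The key structural point I would extract is that, since the $T^R_{r_n}$ realize finitely many types and $g$ carries the branch at $r_n$ into the branch at $r_{m}$ for some $m>n$ (plus possibly part of $R$), a pigeonhole/compactness argument over these finitely many types lets one build a single ray in $\eta$ that is genuinely stable.

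The main steps, in order, would be: (1) normalize to a ray $R \in \eta$ with $g(R) \subseteq R$ for some non-elliptic $g$ with direction $\eta$, and record how an arbitrary $f \in Emb(T)$ acts: forward preservation gives that $f$ maps a tail of $R$ into $R$, and by projectivity the displacement $d(r_n, f(r_n))$ is controlled along $R$; (2) use regularity — finitely many isomorphism types among the $T^R_{r_n}$ — to show that the ``defect'' of an embedding (how far it fails to fix $R$ pointwise, or the length of the initial segment it disturbs) is uniformly bounded, essentially because an embedding cannot inject a branch of one type properly into infinitely many branches without eventually repeating structure in a way that forces it to settle down; (3) from this uniform bound, identify a canonical tail $R' \subseteq R$, namely the vertices far enough along $R$ that every $f \in Emb(T)$ must map $R'$ into $R$; (4) finally upgrade ``maps $R'$ into $R$'' to ``preserves a fixed $\eta$-ray'' by taking the intersection (or a suitable diagonal) of the images $f(R')$ over all $f$ — this intersection is still an $\eta$-ray because all these rays are equivalent (same end $\eta$) and the displacements are bounded, so the intersection is cofinite in each.

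The hard part will be step (2): converting regularity (finiteness of the set of branch-types along $R$) into a genuine \emph{uniform} bound on how badly embeddings can perturb $R$. The subtlety is that a priori different embeddings could disturb longer and longer initial segments of $R$, or could map a branch $T^R_{r_n}$ deep into the interior of another branch rather than onto a branch along $R$; one has to rule this out using that such a map, iterated or combined with $g$, would produce an embedding of $T$ whose behavior contradicts either forward preservation of $\eta$ or the finiteness of branch-types (an infinite strictly descending chain of embeddings of a fixed branch-type into proper parts of itself, which regularity forbids). I would handle this with a König's-lemma-style argument on the tree of ``possible finite approximations'' to an embedding restricted to $R$ and its branches, where regularity guarantees finite branching. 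Once the uniform bound is in hand, steps (3) and (4) are routine, and the statement about \emph{backward} preservation (mentioned in the definition of almost rigid) is not needed here — only forward preservation is hypothesized, and the conclusion is the existence of a preserved $\eta$-ray.
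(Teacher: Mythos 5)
First, a framing note: the paper does not prove Theorem~\ref{Regularnonalmostrigid} at all --- it is imported verbatim from \cite{LPS} (Theorem~1.3) --- so there is no in-paper argument to compare yours against; your proposal has to stand on its own, and as written it does not.

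The essential gap is your step (2). The claim that regularity yields a uniform $N$ such that every $f\in Emb(T)$ maps the tail of $R$ beyond $r_N$ into $R$ (as a forward shift) is not an intermediate lemma: it \emph{is} the theorem, since such a tail is exactly a preserved $\eta$-ray. Your justification for it --- ``an embedding cannot inject a branch of one type properly into infinitely many branches without eventually repeating structure,'' plus ``a K\"onig's-lemma-style argument on the tree of possible finite approximations'' --- names techniques without producing an argument. You never specify the finitely-branching tree, what an infinite branch of it would give you, or why finiteness of the equimorphy types among the $T^R_{r_n}$ (which are infinite trees, each of which may embed properly into itself in many ways) bounds anything. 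Most importantly, you never explain how the strictly shifting embedding $g$ --- whose existence is the \emph{only} thing the non-almost-rigidity hypothesis supplies, and without which the conclusion is not available --- is used to control an arbitrary $f$. Composing $f$ with large powers of $g$ and pigeonholing on branch types is presumably the intended mechanism, but none of it is carried out, and this is where all the work of the theorem lives.

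There is also a concrete error in step (4): the intersection $\bigcap_f f(R')$ over all $f\in Emb(T)$ is empty, not ``cofinite in each,'' because the displacements along $R'$ are \emph{not} uniformly bounded --- the powers $g^k$ shift $R'$ by $k$ times the periodicity of $g$. What you actually need (and what makes step (4) superfluous) is that once every $f$ restricts to a forward shift of a single common tail $R'$, that tail already satisfies $f(R')\subseteq R'$ for every $f$ and is the preserved ray. The conflation of ``bounded initial disturbance before $f$ settles onto $R$'' with ``bounded shift along $R$'' suggests the uniformity statement in step (2) has not been formulated precisely enough to be proved.
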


A tree $T$ is  {\it stable} if either there exists a non-regular end $\eta$ such that $Emb(T)$ preserves $\eta$ forward; or there is an almost rigid end; or $Emb(T)$ preserves a vertex or an edge or a ray or a double ray (\cite{LPS}).  Laflamme, Pouzet and Sauer proved that stable trees have TAP (see \cite{LPS}, Theorem 1.9). 
We show that a tree $T$ satisfying the first case of Theorem \ref{Opencases} is stable and consequently it has TAP.

\begin{theorem} \label{RegularStable}
Let T be a tree. If there is a regular end of $T$ preserved forward by all self-embeddings of T, then T is stable. Consequently, T has TAP. 
\end{theorem}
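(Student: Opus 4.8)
The goal is to show that if $T$ has a regular end $\eta$ preserved forward by every embedding of $T$, then $T$ is stable. The definition of stability offers several alternatives; the natural dichotomy is on whether $\eta$ is almost rigid. If $\eta$ happens to be almost rigid, we are immediately done by the second clause in the definition of stable. So the substantive case is when $\eta$ is a regular end preserved forward by $Emb(T)$ but is \emph{not} almost rigid. In that case I would invoke Theorem~\ref{Regularnonalmostrigid} (Laflamme--Pouzet--Sauer, \cite{LPS}, Theorem 1.3) verbatim: its hypotheses are exactly ``$T$ has a regular end $\eta$, not almost rigid, with $Emb(T)$ preserving $\eta$ forward,'' and its conclusion is that $Emb(T)$ preserves some $\eta$-ray. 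Preserving a ray is one of the listed conditions in the definition of stable, so $T$ is stable in this case as well.

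Having established that $T$ is stable in both cases, the ``Consequently'' clause follows directly by quoting the Laflamme--Pouzet--Sauer theorem that the Bonato--Tardif conjecture holds for all stable trees (\cite{LPS}, Theorem 1.9). So the proof reduces to the two-line case split described above, and essentially no new work is needed beyond correctly matching hypotheses to the cited theorems.

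The only place where care is genuinely required — and what I expect to be the main (admittedly modest) obstacle — is verifying that the hypothesis ``$\eta$ is regular'' is preserved as we feed it into Theorem~\ref{Regularnonalmostrigid}, and checking there is no hidden gap between ``$Emb(T)$ preserves $\eta$ forward'' as used here and the precise formulation in \cite{LPS}. One should confirm that ``preserves $\eta$ forward'' means every $f \in Emb(T)$ satisfies $f(R) \subseteq R'$ for some $\eta$-rays $R, R'$ (equivalently, $f$ maps $\eta$-rays into $\eta$-rays), which is the sense in which \cite{LPS} uses it, so that the citation applies cleanly. I would also remark that the case distinction is exhaustive: either all embeddings fix pointwise a cofinite subset of every $\eta$-ray (almost rigid), or some embedding does not, which is exactly the negation used to trigger Theorem~\ref{Regularnonalmostrigid}.

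In short: split on whether $\eta$ is almost rigid; in the almost-rigid case $T$ is stable by definition; in the other case apply Theorem~\ref{Regularnonalmostrigid} to get an $Emb(T)$-preserved $\eta$-ray, which again makes $T$ stable by definition; then apply \cite{LPS}, Theorem 1.9, to conclude that $Sib(T) \in \{1, \infty\}$.
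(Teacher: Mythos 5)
Your proof is correct, and it is genuinely shorter than the one in the paper. You split only on whether the distinguished regular end $\eta$ is almost rigid: if it is, stability follows from the second clause of the definition; if it is not, the hypotheses of Theorem~\ref{Regularnonalmostrigid} are satisfied verbatim (regular end, not almost rigid, preserved forward by $Emb(T)$), so $Emb(T)$ preserves an $\eta$-ray and stability follows from the last clause. This two-case dichotomy is exhaustive and every citation matches its hypotheses, so the argument is complete. The paper instead runs through the four alternatives of Hamann's fixed point theorem (Theorem~\ref{Classification}), first showing that $\mathcal{D}(Emb(T))$ has at most one element (any non-elliptic $g$ must have $g^+=\eta$), discarding cases (iii) and (iv), exhibiting an explicitly preserved $\eta$-ray in case (i), and splitting case (ii) according to whether a direction exists. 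The reason for this extra work is not the theorem itself but Corollary~\ref{Regularray}: the finer case analysis shows that whenever $Emb(T)$ fixes a vertex or an edge, or whenever a non-elliptic embedding exists, an actual ray is preserved by $Emb(T)$ --- a conclusion that is used later in the proof of Lemma~\ref{Parabolicregular} and that your shorter argument does not deliver (in your almost-rigid branch you only conclude that the end is almost rigid, not that a single common ray is preserved). So for the statement as posed, your route is a clean simplification; for the paper's downstream needs, the longer case analysis earns its keep.
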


\begin{proof}
For simplicity, set $M:=Emb(T)$. Let $\eta$ be a regular end preserved forward by $M$.
First note that $T$ cannot have more than one direction because otherwise there exists some direction $\mu$ other than $\eta$, that is $\mu=g^+$ for some non-elliptic $g\in M$, which implies that $g$ does not preserve $\eta$ forward, a contradiction. More, $M$ satisfies one and only one of the cases (i)-(iv) of Theorem \ref{Classification}. 
Cases (iii) and (iv) amount to having more than one direction. Suppose that case (i) occurs and let $F=\{x,y\}$ be a vertex or an edge fixed by $M$. Let $R_x$ be the $\eta$-ray starting from $x$ and let $g\in M$ be given. Then, by assumption there is a ray $R_g\in \eta$ with $g(R_g)\subseteq R_g$. Set $R:=R_x\cap R_g$ which is an $\eta$-ray. Since $g$ fixes $F$, it follows that $g$ is identity on $R$ and consequently the ray $R_x$ is fixed by $g$. Hence, $R_x$ is fixed by $M$. Finally, assume that $M$ fixes a unique element of $\mathcal{L}(M)$ which is $\eta$.
First assume that there is no direction. Then all self-embeddings of $T$ are elliptic. Let $f\in M$. Since $f$ is elliptic and fixes $\eta$, it fixes a vertex and hence the ray in $\eta$ starting at this vertex. Indeed, all elements of $M$ preserve $\eta$ forward and backward meaning that $\eta$ is almost rigid. Consequently, $T$ is stable in this case.
Now suppose that there is only one direction. The unique direction must be $\eta$ which is regular. Since $\eta$ is a direction, $\eta=f^+$ for some non-elliptic $f\in M$. Since $f$ preserves $\eta$ forward and since $f$ is non-elliptic, there exists an $\eta$-ray $R$ so that $f(R)\subset R$. Clearly, $f$ does not fix pointwise a cofinite subset of $R$ meaning that $\eta$ is not almost rigid. By Theorem \ref{Regularnonalmostrigid}, it follows that $M$ preserves some $\eta$-ray. 

Therefore, in all cases, $T$ is stable and it has TAP.
\end{proof}

The proof of Theorem \ref{RegularStable} implies the following statement which will be used later to construct infinitely many siblings for a locally finite tree having a parabolic self-embedding. 

\begin{corollary} \label{Regularray} 
Let T be a tree and assume that there is a regular end of T preserved forward by all self-self-embeddings of T. If there is a vertex or an edge fixed by $Emb(T)$, or if there is a non-elliptic self-embedding of T, then there is a ray preserved by $Emb(T)$. 
\end{corollary}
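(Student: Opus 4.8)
The plan is to extract the needed statement directly from the case analysis already performed in the proof of Theorem~\ref{RegularStable}. Write $M:=Emb(T)$ and let $\eta$ be the given regular end preserved forward by $M$. As observed in that proof, $M$ cannot have two or more directions, so $M$ falls under case (i) or case (ii) of Theorem~\ref{Classification} (cases (iii) and (iv) would force a second direction). I would simply follow the two hypotheses of the corollary through these cases.

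First suppose there is a vertex or an edge $F=\{x,y\}$ fixed by $M$ (case (i)). Then I would reproduce verbatim the corresponding paragraph of the proof of Theorem~\ref{RegularStable}: let $R_x$ be the $\eta$-ray starting at $x$; for any $g\in M$ pick an $\eta$-ray $R_g$ with $g(R_g)\subseteq R_g$ (exists since $g$ preserves $\eta$ forward), set $R:=R_x\cap R_g$, which is again an $\eta$-ray; since $g$ fixes $F$ and $R$ is the unique ray in $\eta$ from $x$ (or $y$), $g$ must be the identity on $R$, hence $g$ fixes $R_x$ setwise. As $g$ was arbitrary, $R_x$ is preserved by $M$, giving the desired ray.

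Next suppose instead that there is a non-elliptic embedding $f\in M$ (so $f$ has a direction, which must be $\eta$ itself since $\eta$ is the only possible direction). We are then necessarily in case (ii) with $M$ fixing $\eta\in\mathcal{L}(M)$, and moreover $\eta=f^+$ is a direction. Since $f$ preserves $\eta$ forward and is non-elliptic, there is an $\eta$-ray $R$ with $f(R)\subset R$ (strictly), so $f$ does not fix any cofinite subset of $R$ pointwise and $\eta$ is not almost rigid. Now apply Theorem~\ref{Regularnonalmostrigid}: since $\eta$ is regular, not almost rigid, and preserved forward by $Emb(T)=M$, it follows that $M$ preserves some $\eta$-ray. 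This handles the second hypothesis.

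In either case a ray is preserved by $Emb(T)$, which is the claim. There is no real obstacle here — the only point requiring a little care is confirming that the two hypotheses of the corollary genuinely land inside case (i) and case (ii) respectively (in particular that the existence of a non-elliptic embedding rules out cases (i), (iii), (iv) once $\eta$ is the unique direction), and this is already spelled out in the proof of Theorem~\ref{RegularStable}. I would present the corollary's proof as essentially a pointer: ``This is immediate from the case analysis in the proof of Theorem~\ref{RegularStable}: the hypothesis that $Emb(T)$ fixes a vertex or edge places us in case (i), where $R_x$ is shown to be preserved; the hypothesis that a non-elliptic embedding exists places us in case (ii) with a unique direction $\eta$, where Theorem~\ref{Regularnonalmostrigid} yields a preserved $\eta$-ray.''
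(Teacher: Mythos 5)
Your proposal is correct and is essentially the paper's own argument: the paper gives no separate proof of the corollary, stating only that it follows from the proof of Theorem~\ref{RegularStable}, and you have extracted exactly the two relevant branches of that case analysis (the fixed vertex/edge case yielding the preserved ray $R_x$, and the non-elliptic case forcing the unique direction $\eta$ to be non--almost-rigid so that Theorem~\ref{Regularnonalmostrigid} applies).
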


\section{Siblings of Locally Finite Trees} \label{LFTrees}

In this section we prove that a locally finite tree not having any hyperbolic self-embedding has both TAP and ASP. Throughout this section $T$ represents a locally finite tree and by $M$ we mean the monoid of all self-embeddings of $T$. 
Our method will be based on analysing the size of the set $\mathcal{D}(M)$ of all directions of non-elliptic elements in $M$. By Theorem \ref{SizeofDm}, $\mathcal{D}(M)$ has either zero, one, two or infinitely many elements. We show that if the size of $\mathcal{D}(M)$ is not infinite, then $T$ has TAP. We also show that if a non-regular end of  $T$ is preserved forward or backward by some non-elliptic self-embedding, then $T$ has ASP (Corollary \ref{Nonregularend}). Further, we show that if $T$ has a parabolic self-embedding, then $T$ has ASP, unless $T$ is a ray (Theorem \ref{ParabolicInfinite}).

\subsection{Ingredients}   \label{Ing}

In this subsection, we use some results from Tyomkyn \cite{TY} and list all tools we need.

Let $g$ be a non-elliptic self-embedding of $T$. Then $g$ preserves some ray $R$ forward. If $g$ is parabolic (resp. hyperbolic) then the ray $R$ is extended to a unique maximal ray (resp. double ray), denoted by $R_g$ (resp. $Z_g$), and we have $g(R_g)\subset R_g$ (resp. $g(Z_g)=Z_g$).  For every $r\in R_g$ ($\in Z_g$), we denote by $T^g_r$ the maximal subtree of $T$ containing $r$ and edge-disjoint from $R_g$ ($Z_g$). 
A {\em rooted tree}, denoted by $(T,r)$, is a tree $T$ with a
distinguished vertex $r$, called the {\em root}, such that each self-embedding of $T$ fixes $r$ (\cite{TY}).
The next lemma implies that equimorphism and isomorphism are equivalent for  rooted locally finite trees.

\begin{lemma} [\cite{TY}] \label{MutualrootedIso}
Let T be a locally finite tree. For each $r\in T$, any self-embedding of $(T, r)$ into itself is surjective. 
\end{lemma}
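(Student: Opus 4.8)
The plan is to show that any embedding $f$ of the rooted tree $(T,r)$ into itself is a graph isometry fixing $r$, and then to exploit the fact that the distance spheres around $r$ in a locally finite tree are finite.

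First I would record that an embedding between trees is automatically distance-preserving. Given vertices $u,v\in T$, let $u=u_0u_1\cdots u_k=v$ be the geodesic between them. Then $f(u_0)f(u_1)\cdots f(u_k)$ is a walk in $T$ whose consecutive vertices are adjacent and which, by injectivity of $f$, consists of pairwise distinct vertices; hence it is a path, and since a tree contains a unique path between any two vertices, it is the geodesic from $f(u)$ to $f(v)$. Therefore $d(f(u),f(v))=k=d(u,v)$. In particular, as $f(r)=r$, we get $d(r,f(v))=d(r,v)$ for every $v\in T$, so $f$ maps each sphere $S_n:=\{v\in T:\ d(r,v)=n\}$ into itself.

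Next I would use local finiteness to show that each $S_n$ is finite, by induction on $n$: $S_0=\{r\}$, and every vertex of $S_{n+1}$ is adjacent to its neighbour on the geodesic toward $r$, which lies in $S_n$; hence $S_{n+1}$ is contained in the union of the neighbourhoods of the finitely many vertices of $S_n$, each such neighbourhood being finite. Thus $f$ restricts, for every $n$, to an injection of the finite set $S_n$ into itself, and any injective self-map of a finite set is a bijection. Since $T=\bigcup_{n\ge 0}S_n$, this proves that $f$ is surjective. Applying this to the composite $g\circ f$ of two mutual root-preserving embeddings $f\colon(T,r)\to(S,s)$ and $g\colon(S,s)\to(T,r)$, and using the same for $f\circ g$, one concludes that $f$ and $g$ are bijective isometries, hence graph isomorphisms, which yields the equivalence of equimorphism and isomorphism for rooted locally finite trees.

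I do not anticipate a genuine obstacle here. The only two points that require a little care are that the image of a geodesic under an embedding is again a geodesic — this is where acyclicity of $T$ enters — and the inductive finiteness of the spheres $S_n$, which is the sole place where local finiteness of $T$ is essential; neither step is difficult, and the rest is bookkeeping.
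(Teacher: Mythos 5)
Your proof is correct. The paper itself gives no proof of this lemma --- it is quoted from Tyomkyn \cite{TY} --- and your argument (an embedding of a tree is an isometry, so a root-preserving self-embedding permutes each sphere $S_n$ about $r$, and local finiteness makes each $S_n$ finite, forcing the injection $f\restriction S_n$ to be a bijection) is essentially the standard level-by-level argument used there. The two points you flag as needing care, that the image of a geodesic is again a geodesic and that the spheres are finite, are exactly the right ones and are handled correctly.
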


Lemma \ref{MutualrootedIso} asserts that given a non-elliptic self-embedding $g\in M$, if $(T^g_r,r)\approx (T^g_s,s)$, then $(T^g_r,r)\cong (T^g_s,s)$, where $r, s \in R_g (\in Z_g)$. In other words, $Sib((T^g_r,r))=1$ for each non-elliptic $g\in M$ and every $r\in R_g (\in Z_g)$.  

\begin{theorem} [\cite{TY}] \label{Infinitecomponents}
If there exists a self-embedding $f$ of a locally finite tree $T$ such that $T\setminus f(T)$ has infinitely many components, then $Sib(T)=\infty$.  
\end{theorem}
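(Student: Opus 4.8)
The plan is to use the components of $T\setminus f(T)$ to manufacture an infinite family of pairwise non-isomorphic siblings of $T$.

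First I would record the basic \emph{sandwich} observation. Since $f$ is an embedding, $f(T)$ is a (connected) subtree of $T$ and $f$ restricts to an isomorphism $T\to f(T)$; because $T$ is acyclic and $f(T)$ is connected, each component $C$ of $T\setminus f(T)$ is joined to $f(T)$ by exactly one edge $e_C=w_Cc_C$, with $c_C\in C$ and $w_C\in f(T)$, and we view $C$ as rooted at $c_C$. Now any subtree $S$ with $f(T)\subseteq S\subseteq T$ is a sibling of $T$: the inclusion gives $S\hookrightarrow T$, while $f\colon T\to f(T)\subseteq S$ gives $T\hookrightarrow S$. More generally, if for each component $C$ one picks either $\emptyset$ or a subtree $C'\subseteq C$ with $c_C\in C'$, then $f(T)$ together with all the chosen $C'$ and their attaching edges is such a sandwiched subtree, hence a sibling. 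Enumerating the components as $C_1,C_2,\dots$ (an infinite list, by hypothesis), the truncations $D_n:=f(T)\cup\bigcup_{j\le n}(C_j\cup e_{C_j})$ form a strictly increasing chain $f(T)=D_0\subsetneq D_1\subsetneq\cdots$ of siblings of $T$ with $\bigcup_nD_n=T$.

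Next, suppose towards a contradiction that $Sib(T)<\infty$. Then infinitely many of the $D_n$ are mutually isomorphic, and one can choose such a coincidence with arbitrarily large gap, giving $m$ and $p\ge 1$ with $D_m\cong D_{m+p}$; equivalently, $D_m$ carries a non-surjective self-embedding whose image omits, up to the isomorphism, exactly the $p$ nonempty pieces $C_{m+1},\dots,C_{m+p}$. Here I would bring in the local-finiteness tools: $f$ is non-surjective, hence non-elliptic by Proposition~\ref{HAL}, so it preserves some ray forward; and since $T$ is locally finite, only finitely many components attach at any single vertex of $f(T)$, so the attaching vertices $w_C$ occupy infinitely many distinct vertices of $f(T)$. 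Combining this with Lemma~\ref{MutualrootedIso} — rooted locally finite trees are rigid, so each $C_j$ is determined up to isomorphism by its isomorphism type — the aim is to exhibit a numerical invariant of (the isomorphism type of) $D_n$, morally ``how many extra branches have been grafted onto the canonical copy of $T$ sitting inside $D_n$'', that is strictly increasing in $n$; this contradicts $D_m\cong D_{m+p}$ and completes the proof.

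The main obstacle is precisely this last step: converting the equimorphy information, together with local finiteness and rigidity of rooted trees, into an honest contradiction. The danger is that $f(T)$, and hence the $D_n$, may have large automorphism groups that can ``absorb'' finitely many grafted components, so a naive global branch count need not be an isomorphism invariant. One must instead localize — using that each vertex of the locally finite tree $f(T)$ absorbs only boundedly many new branches — and then propagate this local rigidity along $f(T)$ (e.g.\ along a ray $R$ with $f(R)\subseteq R$) to recover the count. I expect this bookkeeping to be the technical heart of the argument, and it is where Tyomkyn's structural lemmas on locally finite trees do the real work.
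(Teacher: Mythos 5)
A preliminary remark: the paper does not prove Theorem~\ref{Infinitecomponents} at all --- it is quoted from Tyomkyn~\cite{TY} --- so your attempt has to be judged on its own. The first half of your proposal is correct and is exactly the standard move (it is also the same device the paper uses to build the siblings $S_k$ in Lemma~\ref{Parabolicregular}): each component $C$ of $T\setminus f(T)$ meets $f(T)$ in exactly one edge, any subtree sandwiched between $f(T)$ and $T$ is a sibling, and the truncations $D_n$ form a strictly increasing chain of siblings with union $T$. All of that is fine.

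The genuine gap is the step you yourself flag as ``the main obstacle'': showing that infinitely many of the $D_n$ are pairwise non-isomorphic. This is not bookkeeping --- it is the entire content of the theorem, and your sketch of a ``strictly increasing numerical invariant'' counting ``extra branches grafted onto the canonical copy of $T$ inside $D_n$'' cannot work as stated. There is no canonical copy of $T$ inside $D_n$ (there are many embedded copies, starting with $f(T)=D_0$ itself, which is already isomorphic to $T$), so the count of grafted components is an artifact of a chosen embedding, not an invariant of the isomorphism type; and an invariant that were genuinely strictly increasing in $n$ would prove \emph{all} the $D_n$ pairwise non-isomorphic, which is more than is true in general (automorphisms of $f(T)$ permuting attachment vertices can make distinct $D_n$ isomorphic). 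Deriving a contradiction from $D_m\cong D_{m+p}$ requires real structural input: note that such an isomorphism only yields a non-surjective self-embedding of $D_{m+p}$ whose complement has $p$ components, and locally finite trees admit such embeddings in abundance (the ray does), so no contradiction follows formally. Compare the paper's own argument in Lemma~\ref{Parabolicregular}: there the non-isomorphism of the $S_k$ is obtained not via an invariant but by analyzing hypothetical isomorphisms $g_k\colon T\to S_k$, forcing their preserved rays to branch off $R_f$ within a fixed finite vertex set and thereby producing a rake inside a nearly finite branch --- and even that argument leans on hypotheses (a regular direction, rake-free branches) that are unavailable in the general setting of Theorem~\ref{Infinitecomponents}. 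As it stands, your proposal establishes only that $T$ has infinitely many siblings \emph{up to equality as subtrees}, not up to isomorphism, so the theorem is not proved.
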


One of the main tools is the notion of a {\em rake}. The concept of rake goes back to the notion of comb defined by Diestel \cite{D}. A {\em comb} is a ray $R$ with infinitely many disjoint finite paths having precisely their first vertex on $R$. Tyomkyn modified this concept by saying that a comb is a ray with infinitely many disjoint non-trivial paths of finite length attached to it \cite{TY}. Indeed, Tyomkyn used the modification of a comb in order to obtain infinitely many components in the proof of Theorem \ref{Infinitecomponents}. Our definition of a rake prevents it from having a vertex with infinitely many neighbours. 
We call a tree $S$ a \textit{rake} if $S$ consists of a ray $R$ with infinitely many vertices attached to distinct vertices of $R$. The vertices of $S\setminus R$ which have degree 1 are called \textit{teeth} and the ray $R$ is called the {\em spine} of $S$.  
A tree $T$ is \textit{nearly finite} when $T$ is a finite tree with finitely many rays attached to it \cite{TY}. Equivalently, $T$ is nearly finite if it is locally finite and has only finitely many vertices of degree $3$ or more. 
We provide a proof for the following lemma based on our definition of a rake. The lemma is given in \cite{TY} which was proved based on the notion of comb.  We shall remind K\"onig's lemma (Lemma 8.1.2, \cite{D}) asserting that any infinite locally finite graph contains a ray. 

\begin{lemma}[\cite{TY}] \label{rake}
A tree $T$ is nearly finite if and only if it is locally finite and does not contain a rake as a subtree.
\end{lemma}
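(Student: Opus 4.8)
The plan is to prove both directions of the equivalence in Lemma \ref{rake}, treating the contrapositive of each implication.

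\medskip

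For the ``only if'' direction, suppose $T$ is nearly finite. By the equivalent characterization, $T$ is locally finite and has only finitely many vertices of degree $\geq 3$. Suppose towards a contradiction that $T$ contains a rake $S$ as a subtree, with spine $R$ and teeth $t_n$, $n<\omega$. Each tooth $t_n$ is attached to $R$ via a path in $S$; let $v_n\in R$ be the vertex of $R$ on that path, so the $v_n$ are distinct. Since $T$ is a tree, the path in $T$ from $t_n$ to $R$ is unique, and it meets $R$ only at $v_n$; the edge of this path incident to $v_n$ is not an edge of $R$, so $v_n$ has degree $\geq 3$ in $T$. This produces infinitely many distinct vertices of degree $\geq 3$ in $T$, contradicting near-finiteness. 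Hence $T$ contains no rake.

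\medskip

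For the ``if'' direction, I would argue the contrapositive: if $T$ is locally finite but \emph{not} nearly finite, then $T$ contains a rake. Since $T$ is not nearly finite, it has infinitely many vertices of degree $\geq 3$; call this set $D$. The subtree $T$ is infinite and locally finite, so by K\"onig's lemma it contains a ray; but I need a ray that ``sees'' infinitely many of the branch vertices in a controlled way. The key step is to build, by recursion, a ray $R = r_0 r_1 \cdots$ together with an injective choice of branch vertices and pendant paths. Concretely, I would consider the rooted tree $(T, r_0)$ for some fixed $r_0$; since $D$ is infinite and $T$ is locally finite, for every finite $S$ there are infinitely many vertices of $D$ outside $S$, and because $T$ is connected and locally finite (finite branching), infinitely many of these lie ``below'' a single child of $r_0$ in some exhaustion — more carefully, using that the ball of radius $n$ around $r_0$ is finite, I pick a ray $R$ along which every initial segment has infinitely many vertices of $D$ in the component of $T \setminus \{r_{n}\}$ not containing $r_0$ (a standard compactness/infinite-pigeonhole recursion). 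Having fixed such a ray $R$, for each $r_i \in R$ look at $T^R_{r_i}$, the maximal subtree containing $r_i$ and edge-disjoint from $R$. Infinitely many of these $T^R_{r_i}$ are non-trivial: otherwise $T$ would be $R$ with only finitely many non-trivial branches, each of which (being locally finite) could itself contain at most finitely many... no — even that is not immediate, so here is the cleaner route: whenever $T^R_{r_i}$ is non-trivial, pick a vertex $w_i$ adjacent to $r_i$ with $w_i \notin R$; this gives a rake with spine $R$ and teeth among the $w_i$ (truncating each branch at depth $1$). If only finitely many $T^R_{r_i}$ were non-trivial, then $T$ would be a nearly finite tree (a finite modification of $R$ together with finitely many locally finite subtrees hanging off) — but a locally finite tree hanging off $R$ at a single point need not be nearly finite, so instead I must ensure the ray $R$ is chosen so that infinitely many branches $T^R_{r_i}$ are non-trivial; this is arranged precisely by the recursion above, which forces infinitely many branch vertices of $D$ onto $R$ (each such vertex of $R$ has a neighbour off $R$). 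Then the edges $r_i w_i$ for those infinitely many $i$ exhibit a rake inside $T$.

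\medskip

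The main obstacle is the recursive construction of the ray $R$ in the ``if'' direction: one must choose $R$ so that infinitely many of its vertices are branch points of $T$ (equivalently, infinitely many branches $T^R_{r_i}$ are non-trivial), and this requires an infinite-pigeonhole argument exploiting local finiteness — at each step, among the finitely many directions away from the current vertex, infinitely many members of $D$ must lie in one of them, so one extends $R$ in that direction. Once $R$ is built this way, reading off one pendant edge at each branch vertex immediately yields the rake, and no further estimates are needed. Everything else (the ``only if'' direction and the conversion of the data into the formal definition of a rake with spine and teeth) is bookkeeping.
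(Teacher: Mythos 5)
Your proof is correct, and the ``if'' direction takes a genuinely different route from the paper's. The paper forms the minimal subtree $G$ of $T$ spanning all vertices of degree at least $3$, applies K\"onig's lemma to $G$ to get a ray, and argues by minimality of $G$ that this ray must pass through infinitely many branch vertices. You instead run the pigeonhole recursion directly in $T$: grow a ray $r_0r_1\cdots$ so that, for every $n$, the component of $T\setminus\{r_n\}$ containing $r_{n+1}$ holds infinitely many vertices of the set $D$ of branch vertices. Your version avoids introducing the auxiliary subtree $G$ and makes the use of local finiteness completely explicit at each step, at the cost of a slightly longer setup; the paper's version is shorter but leans on an unspoken property of minimal spanning subtrees (that a ray in $G$ meeting only finitely many branch vertices would violate minimality). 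Both arguments converge on the same endgame: each branch vertex on the ray has a neighbour off the ray, and these neighbours (distinct, since a common neighbour of two ray vertices would close a cycle) are the teeth of a rake. The ``only if'' direction is the same in both.

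One step you assert but do not justify is that the recursion ``forces infinitely many branch vertices of $D$ onto $R$.'' This does follow, but it needs a sentence: if $r_n\notin D$ for all $n\geq N$, then each such $r_n$ has degree exactly $2$ with both neighbours on $R$, so the component of $T\setminus\{r_N\}$ containing $r_{N+1}$ is just the bare tail $\{r_{N+1},r_{N+2},\ldots\}$ and contains no vertex of $D$, contradicting your invariant. With that line added, the argument is complete; the earlier hesitations in your write-up about non-trivial branches $T^R_{r_i}$ can simply be deleted, since the invariant already does all the work.
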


\begin{proof}
By definition, a nearly finite tree is locally finite and since a rake contains infinitely many vertices of degree at least 3, a nearly finite tree cannot contain a rake establishing  the \say{only if } part. 

For the \say{if } part, suppose $T$ is not nearly finite. Then either it is not locally finite which contradicts the assumption, or it has infinitely many vertices of degree at least 3. Let $G$ be the minimal subtree of $T$ containing all vertices of degree 3 or more.  By K\"onig's lemma, $G$ contains a ray $R$. The ray $R$ contains infinitely many vertices of degree at least 3 in $T$ because otherwise $R$ intersects only finitely many of such vertices which contradicts the minimality of $G$. For every $r\in R$ of degree at least 3 in $T$, let $v_r$ be a vertex in $T$ not in $R$ and adjacent to $r$. Define $S$ to be the subtree of $T$ consisting of $R$ and the $v_r$. The subtree $S$ of $T$ is a rake meaning that $T$ contains a rake, a contradiction. 
\end{proof}

By Theorem \ref{Infinitecomponents} and Lemma \ref{rake} one can prove the following. 

\begin{theorem}[\cite{TY}] \label{Infinitesiblingrake}
Let T be a locally finite tree. If there exists a self-embedding $f$ of $T$ such that some component of $T\setminus f(T)$ is not nearly finite, then $Sib(T)=\infty$.
\end{theorem}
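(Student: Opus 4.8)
The plan is to combine Theorem~\ref{Infinitecomponents} with Lemma~\ref{rake}. Suppose $f$ is an embedding of the locally finite tree $T$ with a component $C$ of $T\setminus f(T)$ that is not nearly finite. By Lemma~\ref{rake}, $C$ is locally finite (being a subgraph of $T$) and contains a rake $S$ as a subtree, with spine $R=r_0r_1\cdots$ and teeth $v_{r_0},v_{r_1},\dots$ attached to infinitely many distinct spine vertices. The goal is to produce from $f$ a new embedding $f'$ of $T$ for which $T\setminus f'(T)$ has infinitely many components, and then invoke Theorem~\ref{Infinitecomponents}.

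First I would record the structure: since $C$ is a component of $T\setminus f(T)$, there is a single vertex $c$ of $f(T)$ adjacent to $C$ (the unique neighbour of $C$ in $f(T)$), so $C$ hangs off $f(T)$ at an edge $e=\{c,c'\}$ with $c'\in C$. Next I would exploit the rake inside $C$ to ``slide'' the image of $T$ deeper into $C$: the idea is that $R$ gives a ray along which one can shift, and because $R$ has infinitely many teeth, shifting the image of $T$ past finitely many spine vertices at a time detaches each tooth we pass over, creating a new component for each. Concretely, pick an embedding $\varphi$ of $T$ into a tail of itself whose image avoids enough of the rake: more carefully, compose $f$ with a self-embedding of $T$ that maps $T$ into the subtree of $T$ rooted at $c'$ along the spine $R$ — since $T$ embeds into $C$ (as $f(T)\supseteq$ a copy of $T$ and $C$ is disjoint from it, one can re-embed) — arranged so that the new image misses all but finitely many teeth $v_{r_n}$. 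Then each missed tooth becomes its own component of $T\setminus f'(T)$, giving infinitely many components.

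The cleanest way to make the previous paragraph rigorous is to first prove the intermediate claim that $T$ embeds into the component $C$ itself (so that $C$, being not nearly finite and containing a copy of $T$ in a ``recursively self-similar'' way, admits an embedding $g:T\to C$ with $T\setminus g(T)\cap C$ large). Actually the simplest route: since $C$ contains a rake with spine $R$ and infinitely many teeth, and $f$ already embeds $T$ with $C$ sitting outside the image, consider the embedding $f'$ of $T$ obtained by first applying $f$ and then applying the ``shift along $R$ by one spine-step'' map defined on the rake (extended by identity-like behaviour where possible); iterating or taking a single well-chosen shift, each application buries one more tooth outside the image. One then checks $T\setminus f'(T)$ has infinitely many components, each containing a tooth $v_{r_n}$ (these teeth are pairwise in different components since removing $f'(T)$ disconnects the spine at each image-endpoint). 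Theorem~\ref{Infinitecomponents} then yields $Sib(T)=\infty$.

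The main obstacle I anticipate is the ``shifting'' construction: one must define an honest injective adjacency-preserving self-map of $T$ (or of $C$) that genuinely uses the rake to expel infinitely many components, while respecting that $T$ need not be vertex-transitive or self-similar in any strong sense — the only leverage is that $C$ is not nearly finite, hence contains a rake, and that $T$ already embeds into $T$ with $C$ available as ``room''. Getting the bookkeeping right so that infinitely many distinct components (not just infinitely many vertices deleted) appear is the delicate point; this is exactly the place where Tyomkyn's original argument for Theorem~\ref{Infinitecomponents} and Lemma~\ref{rake} must be stitched together, and I would expect the proof to essentially reduce the ``not nearly finite component'' hypothesis to the ``infinitely many components'' hypothesis of Theorem~\ref{Infinitecomponents} by a single explicit re-embedding that pushes $f(T)$ past all the teeth of the rake.
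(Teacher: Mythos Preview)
Your overall strategy---extract a rake from $C$ via Lemma~\ref{rake} and reduce to the situation of Theorem~\ref{Infinitecomponents}---matches what the paper intends (the paper gives no proof here, only the remark that the result follows from those two ingredients, citing Tyomkyn). The gap is in the reduction itself: neither of your concrete constructions produces a valid embedding.

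The claim that $T$ embeds into $C$ is false in general. The component $C$ can be as simple as a bare rake (maximum degree~$3$) while $T$ contains vertices of arbitrarily large degree, so there need be no embedding $T\hookrightarrow C$; the parenthetical ``$f(T)\supseteq$ a copy of $T$ and $C$ is disjoint from it, one can re-embed'' does not justify anything of the sort. The ``shift along $R$ by one spine-step, extended by identity elsewhere'' is also not an embedding of $T$: the spine $R$ lies in $C$, disjoint from $f(T)$, so a shift defined on the rake and glued to the identity on $f(T)$ either leaves $f(T)$ untouched or fails to be injective and adjacency-preserving at the interface. In short, there is no reason to expect any embedding $f'$ with image exactly $f(T)\cup P\cup R$, and your own closing paragraph already flags this as the unresolved obstacle.

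The fix is to stop insisting that the intermediate subtree be the \emph{image} of an embedding. Let $P$ be the path in $T$ from the unique neighbour of $C$ in $f(T)$ to the spine $R$, and set $T'':=f(T)\cup P\cup R$. Since $f(T)\subseteq T''\subseteq T$, the subtree $T''$ is a sibling of $T$. For any two teeth $v_i,v_j$, the unique $v_i$--$v_j$ path in the tree $T$ coincides with the path inside the rake and therefore passes through $R\subseteq T''$; hence the teeth lie in infinitely many distinct components of $T\setminus T''$. Tyomkyn's argument for Theorem~\ref{Infinitecomponents}---forming siblings $T''\cup\bigcup_{i\in A}C_i$ for varying $A$ and extracting infinitely many isomorphism types---uses only that $f(T)\subseteq T''$, not that $T''$ is itself an image $f'(T)$, so it applies verbatim. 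That is what ``by Theorem~\ref{Infinitecomponents} and Lemma~\ref{rake}'' is pointing to.
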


\subsection{No Direction and Two Directions} 

Let $T$ be a locally finite tree. 
We show that if there is no direction or there are only two directions, then $T$ has only one sibling. When there is no direction in $\Omega(T)$, then $T$ does not have non-elliptic self-embedding. 
Halin proved the following. 

\begin{proposition} [\cite{HAL}, Corollary 6] \label{HAL}
Each elliptic self-embedding of a locally finite connected graph is an automorphism. 
\end{proposition}

Proposition \ref{HAL} implies that a non-surjective self-embedding of a locally finite tree $T$ is non-elliptic. As a corollary of Proposition \ref{HAL} we get the following. 

\begin{proposition} \label{Ellipticauto}
Let  M be the monoid of self-embeddings of a locally finite tree T. If  $\mathcal{D}(M)=\emptyset$, then $Sib(T)=1$. 
\end{proposition} 

\begin{proof}
Let $T'\subseteq T$ be a sibling of $T$ and $f:T\to T'$ a self-embedding. Since $\mathcal{D}(M)=\emptyset$, $f$ is elliptic and by Proposition \ref{HAL}, $f$ is an automorphism. Thus, $T\cong T'$ meaning that $Sib(T)=1$. 
\end{proof}

The {\em periodicity} of a non-elliptic self-embedding $f$ of $T$, written $p(f)$, is the length of the path $rPf(r)$ for any $r$ in $R_f$ (resp. $Z_f$) when $f$ is parabolic (resp. hyperbolic).

\begin{proposition} \label{Twodirectionssibone}
Let T be a locally finite tree and M  the monoid of self-embeddings of T. If $|\mathcal{D}(M)|=2$, then $Sib(T)=1$. 
\end{proposition}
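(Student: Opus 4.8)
The plan is to show that when $|\mathcal{D}(M)| = 2$, the tree $T$ looks essentially like a double ray with finite decorations, so that equimorphism forces isomorphism. First I would invoke Theorem~\ref{Classification} and Theorem~\ref{SizeofDm}: since there are two directions, there are at least two limit points, so by part~(1) of Theorem~\ref{SizeofDm}, $\mathcal{D}(M)$ is dense in $\mathcal{L}(M)$, and since $\mathcal{D}(M)$ is finite this forces $\mathcal{L}(M) = \mathcal{D}(M)$ to consist of exactly the same two ends; call them $\eta_1, \eta_2$. We are therefore in case~(iii) of Theorem~\ref{Classification}: $M$ fixes the set $\{\eta_1, \eta_2\}$. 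Because $M$ is a monoid (not a group), I would next argue that $M$ cannot swap $\eta_1$ and $\eta_2$ in a way that creates problems: take the non-elliptic $f, g \in M$ with $f^+ = \eta_1$, $g^+ = \eta_2$. Each of $f, g$ preserves forward its own direction, and since any embedding fixing the two-element set $\{\eta_1,\eta_2\}$ but swapping them would have to preserve the unique double ray $Z$ between them, I would deduce that $M$ preserves $Z$ (setwise), i.e.\ every embedding of $T$ maps $Z$ into $Z$ (after possibly reversing orientation), hence preserves the double ray $Z = Z_f = Z_g$.

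The second block of the argument analyzes the hanging subtrees. Writing $Z = (\ldots, z_{-1}, z_0, z_1, \ldots)$, for each $n$ let $T_n := T^Z_{z_n}$ be the maximal subtree containing $z_n$ edge-disjoint from $Z$. Since $f$ is non-elliptic with $f(Z) = Z$ and periodicity $p(f) = p$, applying $f$ shifts the indexing: $f$ embeds $T_n$ into $T_{n+p}$ (with appropriate orientation), and similarly $g$ shifts in the opposite direction. I would like to conclude that each $T_n$ is a finite tree. Suppose some $T_n$ is infinite; being locally finite it contains a ray $R'$ by K\"onig's lemma, and $R'$ together with $Z$ would produce an end of $T$ distinct from $\eta_1, \eta_2$ — but I must check this new end is actually in $\mathcal{L}(M)$ or $\mathcal{D}(M)$ to get a contradiction, which is the delicate point. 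The cleaner route: since $f$ embeds $T_n \hookrightarrow T_{n+p}$ and $g$ embeds $T_{n+p}$ back toward $T_n$ (up to a bounded shift), iterating mutual embeddings among the finitely-many congruence classes of indices modulo $p$ and using Lemma~\ref{MutualrootedIso} (equimorphic rooted locally finite trees are isomorphic) forces $T_n \cong T_{n+p}$ for all $n$; and if any $T_n$ were infinite and not nearly finite, then $T \setminus f(T)$ would have a component that is not nearly finite, so Theorem~\ref{Infinitesiblingrake} would give $Sib(T) = \infty$, not $1$ — so in the regime we care about the $T_n$ are at worst nearly finite and the structure is highly periodic.

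Finally I would prove that $Sib(T) = 1$ directly. Let $T'$ be a sibling with embeddings $\varphi : T \to T'$ and $\psi : T' \to T$. Since $T'$ is equimorphic to $T$ it also has monoid with exactly two directions (one checks $\psi \circ \varphi$ and $\varphi \circ \psi$ are embeddings of $T$, $T'$ respectively, and directions transfer), so $T'$ likewise has a preserved double ray $Z'$ with finite/nearly-finite hanging subtrees $T'_m$, periodic up to the relevant periodicities. The composite $\psi\varphi \in M$ is non-elliptic (it cannot be elliptic since that would make it an automorphism by Proposition~\ref{HAL}, which is fine, but more to the point its direction must be $\eta_1$ or $\eta_2$), preserving $Z$, and by Lemma~\ref{MutualrootedIso} each $T_n$ satisfies $Sib(T_n) = 1$. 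Matching up the double rays $Z$ and $Z'$ and the indexed families of hanging trees $\{T_n\}$ and $\{T'_m\}$ via $\varphi, \psi$ — using that both families are eventually periodic in both directions with the same pattern of isomorphism types, and that mutual embeddings of such periodic sequences of rigid (sibling-number-one) finite trees must be bijective on isomorphism types with the same multiplicities — yields an isomorphism $T \cong T'$. I expect the main obstacle to be exactly this last matching step: ruling out a ``drift'' in which $\varphi$ maps $T$ into a proper sub-configuration of $T'$ that is equimorphic but shifts or deletes some hanging trees. The key leverage against drift is that $T \setminus \varphi\psi(T)$ (a component of which would be a full $T_n$ or part of $Z$) must have only finitely many components lest Theorem~\ref{Infinitecomponents} force $Sib(T) = \infty$, combined with Lemma~\ref{MutualrootedIso} pinning down the finite pieces exactly.
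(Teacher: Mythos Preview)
Your second block contains the right computation but you draw the wrong conclusion from it, and your third block is both unnecessary and incomplete. Once you have non-elliptic $f,g$ with $f^+=\eta_1=g^-$ and $g^+=\eta_2=f^-$ sharing the double ray $Z$, and periodicities $p(f)=m$, $p(g)=n$, the composition $g^m f^n$ fixes each vertex of $Z$; hence for every $r\in Z$ you get $T^f_r \hookrightarrow T^f_{f(r)} \hookrightarrow T^f_{f^n(r)} \hookrightarrow T^f_r$, so by Lemma~\ref{MutualrootedIso} the map $f\restriction T^f_r$ is an \emph{isomorphism} onto $T^f_{f(r)}$. That means $f$ itself is surjective, i.e.\ an automorphism. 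The same argument applies to any hyperbolic embedding, elliptic embeddings are automorphisms by Proposition~\ref{HAL}, and one needs the cited result of Hamann (Proposition~2.8(ii) in \cite{HAM}) to know that when $|\mathcal{L}(M)|=2$ every non-elliptic embedding is hyperbolic. Thus every embedding of $T$ is an automorphism, and $Sib(T)=1$ follows immediately: any sibling $T'\subseteq T$ arises as the image of an embedding, hence equals $T$. No matching argument with a generic sibling is required.

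Two concrete gaps in what you wrote: first, your claim ``in the regime we care about the $T_n$ are at worst nearly finite'' is circular --- you are trying to \emph{prove} $Sib(T)=1$, so you cannot exclude a case by observing that it would yield $Sib(T)=\infty$. (In fact the $T_n$ need not be finite or nearly finite; the argument above never uses this.) Second, you never verify that an arbitrary non-elliptic embedding is hyperbolic rather than parabolic; this is exactly what Hamann's proposition supplies, and without it your analysis of $f$ and $g$ does not cover all embeddings. The ``drift'' obstacle you anticipate in the matching step is real for your approach and is precisely why the paper bypasses it by proving surjectivity of each embedding directly.
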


\begin{proof}
Since $|\mathcal{D}(M)|=2$, we get $|\mathcal{L}(M)|\geq 2$. Further, $\mathcal{D}(M)$ is dense in $\mathcal{L}(M)$ by Theorem \ref{SizeofDm} (1). Hence, $|\mathcal{L}(M)|=2$.  According to a result of Hamann (Proposition 2.8 (ii), \cite{HAM}), when $|\mathcal{L}(M)|=2$, then all non-elliptic self-embeddings of $T$ are hyperbolic. Therefore, it suffices to show that each hyperbolic self-embedding of $T$ is an automorphism because elliptic self-embeddings of locally finite trees are automorphisms by Proposition \ref{HAL}. 

Let $f$ be a hyperbolic self-embedding of $T$.   
Let $Z$ be the unique double ray preserved by $M$. Then $Z$ is the unique double ray between $f^-$ and $f^+$. If $\eta$ is a direction other than $f^+$, then $\eta=f^-$ because otherwise $|\mathcal{D}(M)|=\infty$, a contradiction. By definition, $f^-=g^+$ for some hyperbolic $g\in M$. An immediate conclusion is that $g^- = f^+$. Let $p(f)=m$ and $p(g)=n$ and $r \in Z$ be given. We have $d(r, f^n(r))=nm$ and $d(f^n(r), g^m(f^n(r)))=nm$. It follows that $g^m (f^n(r))=r$.   Thus,  $(T^f_r,r) \hookrightarrow (T^f_{f(r)},f(r)) \hookrightarrow (T^f_{f^n(r)},f^n(r)) \hookrightarrow (T^f_r,r)$ and by Lemma \ref{MutualrootedIso}, $(T^f_r,r) \cong (T^f_{f(r)},f(r))$. The embedding  $f_{\upharpoonright T^f_r}$ defines this isomorphism. In other words, $f_{\upharpoonright T^f_r}$ is an onto embedding from $T^f_r$ to $T^f_{f(r)}$. Hence, $f$ is invertible meaning that $f$ is an automorphism.  
\end{proof}

\subsection{Parabolic Self-Embeddings} \label{SiblingsofT}

Our aim in this section is to find infinitely many siblings of a locally finite tree $T$ when there is a parabolic self-embedding of $T$, unless $T$ is a ray. In case the direction of the parabolic self-embedding is non-regular, we use Theorem \ref{Infinitecomponents} and when the direction is regular, we construct a strictly decreasing sequence of subtrees of $T$ w.r.t inclusion and prove that they contain infinitely many pairwise non-isomorphic siblings of $T$. 

Let $f$ be a non-elliptic self-embedding of $T$.  For two distinct vertices $s, t\in R_f$ ($\in Z_f$), we say $s$ is on the \textit{left side} of $t$ with respect to $f$ (equivalently, $t$ is on the \textit{right side} of $s$ with respect to $f$), denoted by $s<_f t$, if the component of $R_f\setminus\{s\}$ (resp. $Z_f\setminus \{s\}$) containing $t$ belongs to $f^+$. For two vertices $s, t\in Z_f$, by $s\leq_f t$ we mean $s<_f t$ or $s=t$. Let $R$ be a ray contained in $Z_f$.
By $s\leq_f R$ (resp. $R\leq_f s$), we mean $s\leq_f r$ (resp. $r\leq_f s$) for every $r\in R$.

Let $f$ be a parabolic self-embedding of a locally finite tree $T$. We consider the case $f^+$ is regular and the case $f^+$ is non-regular separately. First we construct a decreasing sequence of siblings of $T$ w.r.t embeddability by means of $R_f$. 

\begin{lemma} \label{Parabolicregular}
Let $T$ be a locally finite tree which is not a ray and $f$ a parabolic self-embedding of T. If the direction $f^+$ is regular, then $Sib(T)=\infty$. 
\end{lemma}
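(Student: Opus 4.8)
The plan is to analyze the structure of $T$ relative to the maximal ray $R_f = r_0 r_1 r_2 \cdots$ with $f(R_f) \subset R_f$, and to split into two cases depending on whether some component hanging off $R_f$ is "large" or whether all of them are finite. First I would note that $f$ restricted to $R_f$ is a shift by $p(f) = p$ steps, so $f$ maps $T^f_{r_i}$ into $T^f_{r_{i+p}}$; iterating, the subtrees $T^f_{r_i}$ along the ray are "increasing" under embeddability in blocks of size $p$, and by regularity of $f^+$ only finitely many isomorphism types of $T^f_{r_i}$ occur.

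\textbf{Case 1: some $T^f_{r_i}$ is infinite (equivalently, not nearly finite, or at least infinite).} Here I would aim to apply Theorem~\ref{Infinitesiblingrake} or Theorem~\ref{Infinitecomponents}. The idea: since $f(R_f) \subsetneq R_f$, the image $f(T)$ misses the initial segment of $R_f$ before $f(r_0)$, and more usefully it misses parts of the hanging subtrees. If infinitely many $T^f_{r_i}$ are infinite (which follows from regularity together with one of them being infinite, since the finitely many types repeat cofinally), then $T \setminus f(T)$ — or a cleverly chosen iterate $T \setminus f^k(T)$ — contains infinitely many of these infinite pieces, or at least one component that is not nearly finite, giving $Sib(T) = \infty$. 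I would make this precise by locating, for each large $i$, a vertex in $T^f_{r_i}$ not in the image, using that $f$ shifts indices and cannot be surjective onto the hanging subtrees consistently with being a proper ray-shift.

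\textbf{Case 2: every $T^f_{r_i}$ is finite.} Then $T$ is essentially $R_f$ with finite decorations, and since $f^+$ is regular only finitely many decoration-types appear. Here I would construct the strictly decreasing sequence of siblings announced in the section intro: for $k \geq 0$ let $T_k$ be the subtree of $T$ obtained by deleting the first $k$ "slots" of the ray together with their hanging finite subtrees — i.e. $T_k$ is the component of $T - \{r_{k-1} r_k\}$ containing $r_k$ (with $T_0 = T$). Each $T_k$ embeds into $T$ trivially, and $T$ embeds into $T_k$ via $f^m$ for $m$ large enough that $f^m(R_f) \subseteq R_f$ starts past $r_k$ (using regularity to match up the finitely many hanging-subtree types, possibly absorbing a bounded mismatch into a cofinite identity on the tail); so all $T_k$ are siblings of $T$. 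To see infinitely many are pairwise non-isomorphic, I would use Lemma~\ref{MutualrootedIso}: if $T_k \cong T_\ell$ for all large $k, \ell$ then $T$ rooted appropriately would be forced to be eventually periodic along $R_f$ in a way that, combined with $f$ being a proper (non-surjective) shift, contradicts $T$ not being a ray — the finite hanging subtrees would all have to be trivial. This is the step I expect to be the main obstacle: ruling out that the $T_k$ collapse to finitely many isomorphism types requires carefully exploiting the interplay between regularity (finitely many types), the periodicity of $f$, and non-surjectivity, and showing that if they did collapse then $T$ would have to literally be a ray. I would handle it by assuming only finitely many types occur among the $T_k$, extracting a genuine periodic pattern on $R_f$, then showing $f$ would have to be surjective or $T$ a ray, each a contradiction.
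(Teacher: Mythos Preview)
Your proposal has two genuine gaps.

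\textbf{Wrong case split in Case 1.} The dichotomy should be ``some $T^f_{r_i}$ contains a rake'' versus ``every $T^f_{r_i}$ is nearly finite'', not infinite versus finite. These are not equivalent: a single ray attached at $r_i$ makes $T^f_{r_i}$ infinite yet nearly finite. In that situation your Case~1 argument breaks: if each $T^f_{r_i}$ is a ray and $f$ maps $T^f_{r_i}$ isomorphically onto $T^f_{r_{i+p}}$, then $T\setminus f^k(T)$ consists only of the first $kp$ hanging subtrees together with a finite path --- finitely many components, all nearly finite --- so neither Theorem~\ref{Infinitecomponents} nor Theorem~\ref{Infinitesiblingrake} applies. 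Your claimed step ``locating, for each large $i$, a vertex in $T^f_{r_i}$ not in the image'' is simply false here. The paper instead uses the rake criterion: if some $T^f_r$ contains a rake, then a high enough power $f^k$ pushes it entirely outside the image and Theorem~\ref{Infinitesiblingrake} finishes.

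\textbf{The tail construction in Case 2 does not work.} Let $T$ be the caterpillar: the ray $r_0r_1r_2\cdots$ with one leaf attached at every $r_i$. Then $f$ is the shift by one, $f^+$ is regular, every $T^f_{r_i}$ is finite, and your subtree $T_k$ (the component past $r_k$) is isomorphic to $T$ for every $k$. So all $T_k$ are isomorphic to one another and to $T$; your conclusion ``if they collapse then $T$ must be a ray'' is false. The paper's siblings are built differently: one keeps the initial segment of $R_f$ and the first nontrivial hanging subtree $T^f_s$, but replaces $T^f_{f(s)},\ldots,T^f_{f^k(s)}$ by trivial trees, obtaining $S_k$. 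The point is that the vertex $s$ of degree $\geq 3$ is retained in every $S_k$, and any isomorphism $g_k:T\to S_k$ is a non-surjective, hence non-elliptic, embedding of $T$; one first argues (using the nearly-finite hypothesis) that every non-elliptic embedding has direction $f^+$, then invokes Corollary~\ref{Regularray} to get an $M$-invariant ray $R\subseteq R_f$ starting at some $r_t$. This bounds where $R_{g_k}$ can branch off $R_f$, and a pigeonhole argument forces infinitely many of the preimages $g_k^{-1}(s)$ to land in a single nearly finite $T^f_u$, yielding infinitely many vertices of degree $\geq 3$ there --- a rake, contradiction. None of this structure is visible in your tail construction.
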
 

\begin{proof}
Let $R_f=r_1r_2\cdots$ be the maximal ray preserved by $f$.
If for some $r\in R_f$, $T^f_r$ contains a rake, then by choosing sufficiently large $k$, we have $T^f_r\notin \im(f^k)$ and consequently $T\setminus f^k(T)$ contains a rake. This implies that $Sib(T)=\infty$ by Theorem \ref{Infinitesiblingrake}. 

Assume that for no $r\in R_f$, $T^f_r$ contains a rake. In other words, each $T^f_r$, $r\in R_f$, is nearly finite. Since $T$ is not a ray, there is a vertex $r\in R_f$ with $deg(r)\geq 3$. Let $s\in R_f$ be the minimal vertex with respect to $\leq_f$ such that $deg(s)\geq 3$. Such a vertex exists because $R_f$ has the minimum vertex $r_1$ w.r.t $\leq_f$. Let $g$ be a self-embedding of $T$ satisfying $g^+\neq f^+$ and let $r\in R_f$ separate $f^+$ from $g^+$. Then $deg(g^n(s))\geq 3$ for each $n$. It follows that $T^f_r$ contains a rake,  a contradiction. Therefore, for each non-elliptic $g\in M$, $g^+=f^+$. This means that the regular end $f^+$ is preserved forward by $M$. Thus, by Corollary \ref{Regularray}, there is a ray $R$ preserved by $M$.  Clearly $R\subseteq R_f$ meaning that the starting vertex of $R$ is some $r_t$. We will construct a strictly decreasing sequence $(S_k)_{k<\omega}$ of siblings of $T$ w.r.t inclusion. 

\noindent{\bf\large Siblings $S_k$:}
For $k\geq 1$, let $S_k$ be the tree obtained from $T$  by replacing $T^f_{f(s)}, \ldots , T^f_{f^k(s)}$ in $T$ with trivial trees.  Since each $S_k$ is a subtree of $T$, it embeds into $T$. On the other hand, the mapping $f^{k+1}$ is a self-embedding from $T$ into $S_k$. This means that $T\approx S_k$. Clearly, every $S_{k+1}$ is  a proper subtree of $S_k$ and $S_1$ is  a proper subtree of $T$. Therefore, we have $T \supset S_1 \supset S_2 \supset \cdots$.    

\noindent 
We prove that infinitely many of the $S_k$ are pairwise non-isomorphic. First we show that infinitely many of them are non-isomorphic to $T$. For the sake of a contradiction, assume that there is an infinite subset $J$ of $\mathbb{N}$ such that for each $k\in J$, there exists an isomorphism $g_k : T \to S_k$. Since $S_k$ is a proper subtree of $T$, $g_k$ is a non-elliptic self-embedding of $T$ by Proposition \ref{HAL}. So, it preserves a maximal (double) ray $R_{g_k}$ with $g(R_{g_k})\subseteq R_{g_k}$. By the argument above, $g_k^+ = f^+$. If $R_f = R_{g_k}$, then $g_k$ is parabolic meaning that $g_k(r_0)\neq r_0$. But this is not possible because $r_0\in S_k$ is not covered by $g$. Therefore, for  $k\in J$, $R_f \neq R_{g_k}$ and $f^+ = g_k^+$. Let $k\in J$ be given and $r_{g_k}$ be the starting vertex of the ray $R_f\cap R_{g_k}$. We have $r_{g_k} \leq_f r_t$. Since there are only finitely many vertices $r$ with $r_1 \leq_f r \leq_f r_t$, for some $r_1 \leq_f u \leq_f r_t$ and some infinite set $I\subseteq J$,  $u$ is the starting vertex of the ray $R_f\cap R_{g_k}$ for each $k\in I$. 

\noindent\textbf{Case 1} $s \leq_f u$. Note that $s\in S_k$ for each $k\in I$. Therefore, $g_k^{-1}(s)$ is defined and we have $g_k^{-1}(s)\in T^f_u$ for each $k\in I$. Further, if $g_k^{-1}(u)=g_l^{-1}(u)$ for $k, l\in I$ with $k < l$, then $(g_l\circ g_k^{-1})(u)=u$ meaning that $g_l\circ g_k^{-1}$ fixes a vertex. We have $g_l\circ g_k^{-1}: S_k\to S_l$ and since $S_l$ is a proper subtree of $S_k$, it follows that $g_l\circ g_k^{-1}$ is a non-elliptic self-embedding of $S_k$ by Proposition \ref{HAL}, a contradiction. Hence, for distinct $k, l\in I$ we have $g_k^{-1}(s)\neq g_l^{-1}(s)$ meaning that $T^f_u$ has infinitely many vertices of degree at least 3, that is $T^f_u$ contains a rake, a contradiction. 

\noindent\textbf{Case 2} $u <_f s$. Since $s$ is the minimal vertex of $R_f$ w.r.t $\leq_f$ whose degree is greater than 2,  $deg(u)=2$ and thus $u=r_1$. In this case, for each $k\in I$, we have $g_k^{-1}(s)\in T^f_{r_1}$. Further,  $deg(g_k^{-1}(s))\geq 3$ for each $k\in I$. Similar to case 1 we observe that for two distinct $k, l\in I$, $g_k^{-1}(u)\neq g_l^{-1}(u)$. It follows that there are infinitely many vertices in $T^f_{r_1}$ with degree at least 3, that is $T^f_{r_1}$ contains a rake, a contradiction.

Therefore, $T$ is isomorphic to only finitely many siblings $S_k$, that is, there is a positive integer $k_1$ such that $T \ncong S_k$ for every $k \geq k_1$. In particular, $T \ncong S_{k_1}$. For $k_1$ we know that $f^{k_1+1}_{\upharpoonright S_{k_1}}$ is a parabolic self-embedding of $S_{k_1}$. By a similar argument above, there exists $k_1 < k_2$ such that $S_{k_1} \ncong S_{k_2}$ and clearly $T \ncong S_{k_2}$. Continuing this, we obtain the infinite sequence $T, S_{k_1}, S_{k_2}, \ldots$ of siblings of $T$ which are pairwise non-isomorphic. Hence, $Sib(T)=\infty$.
\end{proof}

Note that the proof of Lemma \ref{Parabolicregular} is independent of the number of directions. Further, $f$ in Lemma \ref{Parabolicregular} is obviously non-surjective. We show by the next proposition that if there exists a non-regular end preserved forward or backward (backward occurs when the end is preserved by a hyperbolic self-embedding), then the sibling number of $T$ is infinite. 

\begin{proposition} \label{Nonregularray}
Let T be a locally finite tree. If a non-regular ray in $T$ is preserved forward or backward by a non-elliptic self-embedding, then $Sib(T)=\infty$. 
\end{proposition}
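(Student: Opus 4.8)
The plan is to reduce to Theorem~\ref{Infinitecomponents} by exhibiting, from the hypothesised non-elliptic embedding $g$ preserving a non-regular ray, an embedding $h$ of $T$ such that $T\setminus h(T)$ has infinitely many components. Let $g$ be the non-elliptic embedding and let $R$ be the non-regular ray it preserves. First I would split into the forward and backward cases. If $g$ preserves $R$ forward, replace $g$ by a suitable power so that $g(R)\subsetneq R$ and write $R_g$ for the maximal ray (if $g$ is parabolic) or $R\subseteq Z_g$ (if $g$ is hyperbolic) extending $R$; the point is that $R$ is still non-regular, so among the subtrees $T^g_r$ hanging off the spine at the vertices $r\in R$ there are infinitely many pairwise non-equimorphic ones. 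If instead $g$ preserves $R$ backward, then $R\subseteq g(R')$ for some ray $R'$, and after passing to the inverse direction along the invariant ray/double ray we are again in a situation where some spine $R$ is non-regular and $g$ (or a power) strictly shrinks it.

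The key step is then the following: since the ray $R=r_0r_1\cdots$ is non-regular, the subtrees $T^g_{r_0},T^g_{r_1},\ldots$ fall into infinitely many equimorphism classes. Consider the embedding $h:=g^k$ for $k$ large (or just $g$ itself). Because $h$ moves the spine strictly along $f^+$, the vertices of $R$ between $r_0$ and $h(r_0)$ are not in the image, and more importantly the branches $T^g_{r}$ for $r$ in that finite initial segment are each not covered by $h(T)$; so $T\setminus h(T)$ contains these finitely many branches as components. To get \emph{infinitely many} components I would instead argue as in Lemma~\ref{Parabolicregular}: if every $T^g_r$ were nearly finite, a standard pigeonhole on the finitely many ``shapes'' near the spine combined with non-regularity forces one of the $T^g_r$ to contain a rake, contradicting near-finiteness; hence some $T^g_r$ is not nearly finite, and then for $k$ large enough $T^g_r\not\subseteq\im(g^k)$, so a component of $T\setminus g^k(T)$ is not nearly finite and Theorem~\ref{Infinitesiblingrake} gives $Sib(T)=\infty$. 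Alternatively, if all $T^g_r$ are nearly finite but there are infinitely many equimorphism classes among them, infinitely many of them are nonisomorphic finite-ish trees, and one checks that infinitely many of them cannot fit into $\im(g^k)$ for any fixed $k$, again yielding infinitely many components of $T\setminus g^k(T)$.

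I expect the main obstacle to be bookkeeping in the backward case: translating ``$R\subseteq g(R')$'' into a statement about which spine is non-regular and which finite segment of it is omitted by a power of $g$, and checking that non-regularity of the original ray is inherited by the relevant tail of the invariant ray/double ray (using projectivity and the fact that $g$ only shifts branches by a bounded amount, so equimorphism classes along the spine are permuted/preserved under $g$). The other delicate point is making the pigeonhole rigorous: one must show that if all branches $T^g_r$ are nearly finite and there are only finitely many ``local configurations'' that $g$ can realise near the spine, then non-regularity is impossible unless a rake appears; this mirrors Cases~1 and~2 in the proof of Lemma~\ref{Parabolicregular} and should go through with the same idea of counting vertices of degree $\geq 3$ produced by iterating $g$.
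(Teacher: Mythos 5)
Your target is the right one---like the paper, you aim to reduce to Theorem~\ref{Infinitecomponents}---but the argument you present as the main route contains a false step. You claim that if every branch $T^g_r$, $r\in R$, is nearly finite, then non-regularity of $R$ forces one of them to contain a rake, so that Theorem~\ref{Infinitesiblingrake} applies. This is not true: take $T$ to be a ray $r_1r_2\cdots$ with a path of length $n$ attached to $r_n$ for each $n$, and let $g$ shift everything one step along the ray (mapping the length-$n$ path into the length-$(n+1)$ path). Then $g$ is parabolic and preserves the ray forward, every branch is a finite path (nearly finite, no rake), yet the branches are pairwise non-equimorphic, so the ray is non-regular. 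The degree-$\geq 3$ counting you import from Lemma~\ref{Parabolicregular} relies on having a second embedding with a different direction pushing infinitely many high-degree vertices into one branch; nothing of the sort is available here.

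Your ``alternatively'' paragraph is the correct approach and is what the paper actually does, but the phrase ``one checks that infinitely many of them cannot fit into $\im(g^k)$'' conceals the key step. What is needed is infinitely many pairs $s<_f t$ on the spine with $f(s)=t$ and $T^f_s\not\approx T^f_t$: for such a pair $f(T^f_s)\subsetneq T^f_t$ (properness via Lemma~\ref{MutualrootedIso}), so $T^f_t\setminus f(T)\neq\emptyset$, and leftovers in distinct branches lie in distinct components of $T\setminus f(T)$. To produce infinitely many such pairs the paper pigeonholes on the period: with $f(r_1)=r_m$, the spine splits into the finitely many orbits $B_i=\{T^f_{f^k(r_i)}:k\in\mathbb{N}\}$, one of which must contain infinitely many equimorphism classes; along that single orbit the classes form a chain under embeddability that changes infinitely often, and each change yields a pair as above. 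Without restricting to a single $f$-orbit you cannot compare a branch with its image branch, so this pigeonhole---absent from your sketch, which instead pigeonholes on ``shapes near the spine''---is the missing idea. The backward case, which you flag as an obstacle, is handled by the same argument run along a decreasing sequence in $Z_f$.
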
 

\begin{proof}
Let $R=r_1r_2\cdots$ be a non-regular ray preserved forward by a non-elliptic self-embedding $f$ and assume that $f(r_1)=r_m$, where $m>1$. For  $1\leq i\leq m-1$ set $B_i:=\{T^f_{f^k(r_i)} : k\in\mathbb{N}\}$. Since $R$ is non-regular, there is some $1\leq j\leq m-1$ such that $B_j$ contains infinitely many pairwise non-equimorphic trees. By Lemma \ref{MutualrootedIso} these trees are pairwise non-isomorphic. In other words, there are infinitely many $k$ such that the trees $T^f_{f^k(r_j)}$ are pairwise non-isomorphic. 
Let $(n_k)_{k<\omega}$ be an infinite subsequence of $(f^n(r_j))_{n<\omega}$ which is strictly increasing w.r.t $\leq_f$ such that $T^f_{n_k}\ncong T^f_{n_{k+1}}$. For each $k$ let $s_k, t_k\in R$ be such that $n_k \leq_f s_k <_f t_k \leq_f n_{k+1}$, $f(s_k)=t_k$ and $T^f_{s_k}\ncong T^f_{t_k}$. Since $T^f_{s_k}$ embeds into $T^f_{t_k}$ properly, it follows that $F_k:= T^f_{t_k}\setminus f(T^f_{s_k})$ is non-empty. Note that $R$ separates the $F_k$. In addition, $T\setminus f(T)$ contains the forest $F:=\bigcup_{k<\omega} F_k$ which has infinitely many components. Consequently, $Sib(T)=\infty$  by Theorem \ref{Infinitecomponents}. 

If a non-regular ray $R=r_1r_2\cdots$ is preserved backward by some non-elliptic self-embedding $f$, then $f$ is hyperbolic, $R\subset Z_f$ and $f(r_m)=r_1$ for some $m>1$. By a similar argument given above, there is an infinite sequence $(n_k)_{k<\omega}$ of vertices of $R$ which is strictly decreasing w.r.t $\leq_f$ and $T^f_{n_{k+1}}$ embeds in $T^f_{n_k}$ properly by some power of $f$. Then, by selecting $t_{n_{k+1}}\leq_f t_k <_f s_k \leq_f t_{n_k}$ for which $T^f_{t_k}\ncong T^f_{s_k}$ and $f(t_k)=s_k$, we have $F_k:=T^f_{s_k}\setminus f(T^f_{t_k})\neq\emptyset$. It follows that $T\setminus f(T)$ contains the forest $F:=\bigcup_{k<\omega}F_k$ with infinitely many components. Consequently, $Sib(T)=\infty$ by Theorem \ref{Infinitecomponents}. 
\end{proof}  

The proof of  Proposition \ref{Nonregularray} is also independent of the number of directions. We also note that since $T\setminus f(T)$ has infinitely many components, $f$ is a non-surjective self-embedding of $T$. 

\begin{corollary} \label{Nonregularend}
If for some non-elliptic self-embedding $f$ of a locally finite tree T, $f^+$ or $f^-$ is non-regular, then $Sib(T)=\infty$.
\end{corollary}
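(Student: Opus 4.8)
The plan is to derive this straight from Proposition~\ref{Nonregularray}. The one fact needed beyond that proposition is that \emph{non-regularity is an invariant of the end}, not of the particular ray chosen to represent it: if an end $\eta$ contains a non-regular ray, then every $\eta$-ray is non-regular. Granting this, the corollary reduces to exhibiting, in each case, a non-regular ray that $f$ preserves forward or backward.

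So the first step is to record the end-invariance of regularity. Let $R$ and $R'$ be equivalent rays with common tail $R''$. For every vertex $r$ in the interior of $R''$, the two $R$-edges at $r$, the two $R'$-edges at $r$, and the two $R''$-edges at $r$ all coincide; and the component of $r$ in $T\setminus E(R)$ equals its component in $T\setminus E(R'')$, since the finitely many edges of $R$ preceding $R''$ cannot be reached from $r$ without first traversing an edge of $R''$. Hence $T^R_r=T^{R''}_r=T^{R'}_r$ for all but the first vertex of $R''$, so $\{T^R_r:r\in R\}$ and $\{T^{R'}_r:r\in R'\}$ differ only in finitely many members; one of them has infinitely many equimorphy classes if and only if the other does, i.e.\ $R$ is non-regular if and only if $R'$ is.

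Now the two cases. If $f^+$ is non-regular, then $f$ preserves forward some ray in $f^+$: the maximal ray $R_f$ when $f$ is parabolic, and the forward tail of the invariant double ray $Z_f$ (the one lying in $f^+$) when $f$ is hyperbolic. By the previous step this ray is non-regular, so Proposition~\ref{Nonregularray} yields $Sib(T)=\infty$. If instead $f^-$ is non-regular, then $f$ is hyperbolic (a parabolic embedding has no $f^-$), and the backward tail $R^-$ of $Z_f$, oriented towards $f^-$, satisfies $R^-\subseteq f(R^-)$ because $f$ shifts $Z_f$ forward by its periodicity; thus $R^-$ is a non-regular ray preserved backward by the non-elliptic embedding $f$, and Proposition~\ref{Nonregularray} again gives $Sib(T)=\infty$.

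The only part that calls for a genuine argument is the end-invariance of regularity in the first step; after that, both cases are a direct translation of the definitions of $f^+$, $f^-$, $R_f$, $Z_f$ into the hypothesis of Proposition~\ref{Nonregularray}, so I do not anticipate any real obstacle.
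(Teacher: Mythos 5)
Your proof is correct and takes the same route the paper intends: Corollary~\ref{Nonregularend} is stated there as an immediate consequence of Proposition~\ref{Nonregularray}, and your argument is exactly that reduction. The one substantive point you add --- that regularity is an invariant of the end, so that the specific ray $R_f$ (or the appropriate tail of $Z_f$) preserved by $f$ inherits non-regularity from the end --- is left implicit in the paper, and your verification of it via the common tail is sound.
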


Now Lemma \ref{Parabolicregular} and Corollary \ref{Nonregularend} immediately imply the following.  

\begin{theorem} \label{ParabolicInfinite}
Let $T$ be a locally finite tree. If T has a parabolic self-embedding, then $Sib(T)=\infty$, unless $T$ is a ray.
\end{theorem}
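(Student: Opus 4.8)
The plan is to reduce the statement to the two results that precede it, namely Lemma \ref{Parabolicregular} and Corollary \ref{Nonregularend}, by a simple case split on whether the direction of the given parabolic embedding is regular. So suppose $T$ is a locally finite tree which is not a ray, and let $f$ be a parabolic embedding of $T$. By definition $f$ is non-elliptic and fixes precisely one end, which is $f^+$. The dichotomy is: either $f^+$ is a regular end, or it is a non-regular end, and there is no third possibility since every end is regular or non-regular by definition.

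First I would handle the case where $f^+$ is regular. Here the hypotheses of Lemma \ref{Parabolicregular} are met verbatim: $T$ is locally finite, not a ray, $f$ is parabolic, and $f^+$ is regular. Hence Lemma \ref{Parabolicregular} applies and gives $Sib(T)=\infty$ directly. Nothing further is needed in this branch.

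Next I would handle the case where $f^+$ is non-regular. Since $f$ is a non-elliptic embedding with $f^+$ non-regular, Corollary \ref{Nonregularend} applies (taking the ``$f^+$ is non-regular'' alternative) and yields $Sib(T)=\infty$. Note that in this branch we do not even need the hypothesis that $T$ is not a ray; it is only used to rule out the exceptional case, which arises solely in the regular branch where $T$ being a ray would make $R_f$ the whole tree and leave no vertex of degree $\geq 3$ to exploit. Combining the two branches, whenever $T$ is not a ray and has a parabolic embedding, $Sib(T)=\infty$, which is exactly the claimed statement (a ray has $Sib=1$, consistent with the ``unless'' clause).

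Since both ingredients are already established, I expect essentially no obstacle here; the only thing to be careful about is making the case distinction cleanly exhaustive and confirming that Corollary \ref{Nonregularend} is stated for an arbitrary non-elliptic embedding rather than only a hyperbolic one, so that it legitimately covers a parabolic $f$ with non-regular direction. The genuinely hard work lives upstream in Lemma \ref{Parabolicregular} (the construction of the strictly decreasing sequence $(S_k)$ of siblings and the rake argument showing infinitely many of them are pairwise non-isomorphic) and in Proposition \ref{Nonregularray} (producing a forest with infinitely many components inside $T\setminus f(T)$); the present theorem is just the bookkeeping that assembles them.
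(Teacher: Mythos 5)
Your proposal matches the paper exactly: the theorem is stated there as an immediate consequence of Lemma \ref{Parabolicregular} and Corollary \ref{Nonregularend}, via the same dichotomy on whether $f^+$ is regular or non-regular. Your additional check that Corollary \ref{Nonregularend} covers arbitrary non-elliptic (hence parabolic) embeddings is correct and confirms the reduction is legitimate.
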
 

\begin{corollary}
A locally finite tree which does not have hyperbolic self-embedding has both TAP and ASP. 
\end{corollary}

\begin{proof}
Let $T$ be a locally finite tree whose monoid of self-embeddings does not have hyperbolic self-embedding. If all self-embeddings of $T$ are elliptic, then by Proposition \ref{Ellipticauto} they are automorphisms and $Sib(T)=1$. Note that automorphisms are surjective. If $T$ has a parabolic self-embedding, then $Sib(T)=\infty$ by Theorem \ref{ParabolicInfinite}, unless $T$ is a ray. Moreover, parabolic self-embeddings are non-surjective. Thus, in this case $T$ has both TAP and ASP. 
\end{proof}
 
\subsection{Sibling Number by Means of Directions} 

We collect all the results obtained in the following theorem to count the sibling number of a locally finite tree based on the number of directions.  Recall that by Theorem \ref{SizeofDm} a tree has either none, one, two or infinitely many directions.  

\begin{theorem}
Let T be a locally finite tree and M the monoid of self-embeddings of T.  
\begin{enumerate}
\item If $|\mathcal{D}(M)|=0$ or $2$, then $Sib(T)=1$. 
    \item If $|\mathcal{D}(M)|=1$, then $Sib(T)=1$ or $\infty$. 
    \item If $|\mathcal{D}(M)|=\infty$ and there is a parabolic self-embedding of T or there is a non-regular end preserved forward or backward by some non-elliptic self-embedding of T, then $Sib(T)=\infty$. 
\end{enumerate}
\end{theorem}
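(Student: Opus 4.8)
Since this theorem only assembles facts already established, the plan is to verify each clause by invoking the appropriate earlier statement, with just the middle clause requiring a little extra work. For the first clause, $|\mathcal{D}(M)| = 0$ gives $Sib(T) = 1$ by Proposition \ref{Ellipticauto}, and $|\mathcal{D}(M)| = 2$ gives $Sib(T) = 1$ by Proposition \ref{Twodirectionssibone}. For the third clause, assume $|\mathcal{D}(M)| = \infty$; this already forbids $T$ from being a ray, since the embedding monoid of a ray has exactly one direction, so if $T$ has a parabolic embedding then Theorem \ref{ParabolicInfinite} gives $Sib(T) = \infty$. If instead a non-regular end $\nu$ is preserved forward or backward by a non-elliptic embedding $f$, then $f$ preserves some $\nu$-ray forward or backward; equivalent rays share a tail and hence have the same regularity, so that ray is non-regular and Proposition \ref{Nonregularray} yields $Sib(T) = \infty$ (equivalently $\nu$ must be $f^{+}$ or $f^{-}$, so one may instead cite Corollary \ref{Nonregularend}).

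The middle clause is the only one needing an argument beyond citation. Suppose $|\mathcal{D}(M)| = 1$ with unique direction $\eta$; the first step is to show that $M = Emb(T)$ preserves $\eta$ forward. Every non-elliptic $g \in M$ has $g^{+} \in \mathcal{D}(M) = \{\eta\}$, hence $g^{+} = \eta$ and $g$ preserves $\eta$ forward by definition. For an elliptic $h \in M$, Proposition \ref{HAL} makes $h$ an automorphism, so $h g h^{-1}$ is a non-elliptic embedding whose direction is $h(\eta)$; thus $h(\eta) \in \mathcal{D}(M) = \{\eta\}$, i.e.\ $h$ fixes the end $\eta$. Being elliptic, $h$ fixes a vertex $v$ (if it fixes an edge it fixes both endpoints, as swapping them is incompatible with fixing $\eta$), and an automorphism of a locally finite tree fixing both a vertex $v$ and an end $\eta$ fixes the $\eta$-ray starting at $v$ pointwise, so $h$ preserves $\eta$ forward. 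One may alternatively reach this through Theorem \ref{Classification}: with $|\mathcal{D}(M)| = 1$ the monoid $M$ lies in none of the cases (i) (a monoid fixing a vertex or edge has no non-elliptic element), (iii) (there $|\mathcal{L}(M)| = 2$, so by Theorem \ref{SizeofDm} the set $\mathcal{D}(M)$ is dense in $\mathcal{L}(M)$ and has two elements), or (iv) (two non-elliptic elements with distinct directions), hence it lies in case (ii) and fixes the unique element of $\mathcal{L}(M)$, which is $\eta$.

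With $Emb(T)$ preserving $\eta$ forward, it remains to split on the regularity of $\eta$. If $\eta$ is regular, then $T$ satisfies the hypothesis of Theorem \ref{RegularStable}, so $T$ is stable and the Bonato--Tardif conjecture holds for $T$, giving $Sib(T) \in \{1, \infty\}$. If $\eta$ is non-regular, then applying Corollary \ref{Nonregularend} to a non-elliptic $g$ with $g^{+} = \eta$ gives $Sib(T) = \infty$, so again $Sib(T) \in \{1, \infty\}$. I expect the forward-preservation step of the middle clause to be the only delicate point, specifically the book-keeping about elliptic automorphisms fixing an end; everything else is routine case analysis on the value of $|\mathcal{D}(M)|$.
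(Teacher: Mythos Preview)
Your proof is correct. For clauses (1) and (3) it coincides with the paper's argument essentially verbatim.

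For clause (2) you take a genuinely different route. The paper splits first on whether $T$ has a parabolic embedding: if so, Theorem \ref{ParabolicInfinite} finishes (unless $T$ is a ray); if not, the unique direction comes from a hyperbolic $g$, and the paper then splits on the regularity of $g^{+}$ and $g^{-}$, invoking Corollary \ref{Nonregularend} in the non-regular case and asserting stability (hence Theorem \ref{RegularStable}) when both ends are regular. You instead first prove directly that $Emb(T)$ preserves the unique direction $\eta$ forward, handling elliptic elements via the conjugation trick $hgh^{-1}$ and the observation that an edge-swap cannot fix an end, and only then split on the regularity of $\eta$. Your approach has the advantage of explicitly establishing the hypothesis of Theorem \ref{RegularStable}, which the paper's proof leaves somewhat implicit in the ``both ends regular'' subcase; the paper's approach, on the other hand, exploits Theorem \ref{ParabolicInfinite} more directly and avoids the bookkeeping about elliptic automorphisms. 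Both are short and valid.
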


\begin{proof}
(1) If $|\mathcal{D}(M)|=0$ or 2, then $T$ has one sibling by Propositions \ref{Ellipticauto} and \ref{Twodirectionssibone}. 

(2) Assume that $|\mathcal{D}(M)|=1$. In this case if there is a parabolic self-embedding of $T$, then by Theorem \ref{ParabolicInfinite}, $Sib(T)=\infty$, unless $T$ is a ray. However,  when there is no parabolic self-embedding, then the unique direction $\eta$ is determined by some hyperbolic self-embedding $g\in M$ i.e. $\eta=g^+$. If $g^+$ or $g^-$ is non-regular, then $Sib(T)=\infty$ by Corollary \ref{Nonregularend} and if both $g^+$ and $g^-$ are regular, then $Sib(T)=1$ or $\infty$ by Theorem \ref{RegularStable} because $T$ is stable in this case. 

(3) Finally, assume that $|\mathcal{D}(M)|=\infty$. In this case, clearly $T$ is not a ray. If there is a parabolic self-embedding of $T$, then $Sib(T)=\infty$ by Theorem \ref{ParabolicInfinite} and if there is a non-regular end preserved forward or backward by some non-elliptic self-embedding, then $Sib(T)=\infty$ by Corollary \ref{Nonregularend}. 
\end{proof}

\section{Open Case}

Abdi, Laflamme, Tateno and Woodrow \cite{ALTW} provided a detailed explanation of locally finite trees having an arbitrary finite number of siblings. 
They showed that for each non-zero integer $n$ there are locally finite trees $T_1, \ldots, T_n$ as a finite set of pairwise non-isomorphic siblings such that each tree as a sibling of $T=T_1$ is isomorphic to some $T_k$, $1\leq k\leq n$. The case $n=2$ immediately provides an example not having TAP. Also, it is shown that the tree $T$ has non-surjective self-embeddings, meaning that it does not have ASP. 
Considering all the results in Section \ref{LFTrees} and the tree examples in \cite{ALTW} we may ask the following.

\begin{problem}
Let T be a locally finite tree and M the monoid of self-embeddings of T. Assume that 
\begin{enumerate}
   \item $|\mathcal{D}(M)|=\infty$,
  \item all non-elliptic self-embeddings of T are hyperbolic,
   \item there is no non-regular end preserved forward or backward by some self-embedding of T,
   \item for each self-embedding f of T, $T\setminus f(T)$ is a finite union of nearly finite trees.
\end{enumerate}
What are further conditions needed for T to have TAP or ASP?
\end{problem}

\vspace{0.5cm}
\noindent{\bf \large Acknowledgements}

I would like to thank my PhD supervisors Professor Robert Woodrow and Professor Claude Laflamme for suggesting this problem and for their help and advice.


\vspace{1cm}

\textsc{Department of Mathematics and Statistics, University of Calgary, Calgary, Alberta, Canada, T2N 1N4}

{\em Email address:} 
\url{davoud.abdikalow@ucalgary.ca}


\begin{thebibliography}{5}


\bibitem{ALTW}
D. Abdi, C. Laflamme, A. Tateno, R. Woodrow, {\em An example of Tateno disproving conjectures of Bonato-Tardif, Thomass\'e, and Tyomkyn}, (2022), \url{http://arxiv.org/abs/2205.14679}.

\bibitem{BT}
A. Bonato, C. Tardif, \textit{Mutually embeddable graphs and the tree alternative conjecture}, J. Combin. Theory Ser. B 96 (2006), 874-880. 

\bibitem{D}
R. Diestel. \textit{Graph Theory}. Fifth ed. Springer. (2017).

\bibitem{HA}
R. Halin, {\em Fixed configurations in graphs with small number of disjoint rays}, in: R. Bodendiek (Ed.), Contemporary
Methods in Graph Theory, Bibliographisches Inst., Mannheim, (1990), 639–649.

\bibitem{HAL}
R. Halin. \textit{Automorphisms and endomorphisms of infinite locally finite graphs}. Abh. Math. Sem. Univ. Hamburg 39 (1973), 251-283. 


\bibitem{HAM}
M. Hamann. \textit{Self-self-embeddings of trees}. Discrete Math. 342 (2019), no 12. 111586, 1-7.

\bibitem{HAMP}
M. Hamann, personal communication, November 2019. 

\bibitem{LPS}
C. Laflamme, M. Pouzet, N. Sauer. \textit{Invariant subsets of scattered trees and the tree alternative property of Bonato and Tardif}. Abh. Math. Semin. Univ. Hambg. 87 (2017), 369-408. 

\bibitem{TAT}
A. Tateno, \textit{Mutually embeddable trees and a counterexample to the Tree Alternative Conjecture}, unpublished manuscript, (2008), 32 pages. 


\bibitem{TY}
M. Tyomkyn. \textit{A proof of the rooted tree alternative conjecture}. Discrete Math 309 (2009), 5963-5967. 

\bibitem{TYP}
M. Tyomkyn. \textit{personal communication}, July 2020.

\end{thebibliography}
\end{document}